\documentclass[]{article}

\usepackage{epstopdf}
\usepackage[caption=false]{subfig}

\usepackage{mathtools,hyperref,amsmath,amsthm,amssymb}

\usepackage[numbers,sort&compress]{natbib}
\bibpunct[, ]{[}{]}{,}{n}{,}{,}
\renewcommand\bibfont{\fontsize{10}{12}\selectfont}
\makeatletter
\def\NAT@def@citea{\def\@citea{\NAT@separator}}
\makeatother

\theoremstyle{plain}
\newtheorem{theorem}{Theorem}[section]
\newtheorem{question}[theorem]{Question}
\newtheorem{lemma}[theorem]{Lemma}
\newtheorem{corollary}[theorem]{Corollary}
\newtheorem{proposition}[theorem]{Proposition}

\theoremstyle{definition}
\newtheorem{definition}[theorem]{Definition}

\theoremstyle{remark}
\newtheorem{remark}{Remark}

\newcommand{\R}{\mathbb{R}}

\newcommand{\del}{\partial}

\renewcommand{\gg}{\gamma}

\newcommand{\gd}{\delta}
\newcommand{\gs}{\sigma}

\newcommand{\gl}{\lambda}

\newcommand{\ga}{\alpha}
\newcommand{\gb}{\beta}

\newcommand{\ep}{\varepsilon}

\renewcommand{\bar}[1]{\overline{#1}}

\newcommand{\hatu}{\hat{u}}
\newcommand{\hatuy}{\hat{u}_{y_0}}

\DeclareMathOperator{\tr}{tr}

\DeclareMathOperator{\diam}{diam}

\DeclareMathOperator{\dist}{dist}

\begin{document}


\title{A Liouville theorem for the real twisted Monge-Amp\`ere equation}

\author{Joshua Jordan \thanks{CONTACT J. Jordan. Email: joshua.jordan@vanderbilt.edu}}

\maketitle

\begin{abstract}
We establish a Pogorelov-type interior $C^2$ estimate for the real twisted Monge–Amp\`ere equation without assuming uniform ellipticity. This equation is fully nonlinear and elliptic, but is neither concave nor a function of the eigenvalues of the Hessian, so the standard techniques for interior second-derivative estimates do not directly apply. Building on the partial Legendre transform formulation of Streets and Warren, we derive a differential identity for the associated positive-definite matrix $W$ and use some linear algebra to control the third-order terms in a Pogorelov maximum-principle argument. As an application, we prove a Liouville theorem for entire split-convex solutions with quadratic growth, extending the rigidity theorem of Streets and Warren by replacing their uniform ellipticity hypothesis with suitable quadratic growth assumptions.
\end{abstract}



\tableofcontents

\section{Introduction}
        The real twisted Monge-Amp\`ere (tMA) equation is a non-concave fully nonlinear partial differential equation which was defined by Streets and Warren in \cite{streets_evans-krylov_2016}. Its complex counter-part arises in the study of nonlinear sigma models \cite{hull_generalized_2010} and non-K\"ahler Ricci-flat metrics \cite{streets_pluriclosed_2015-1}. In this paper, we will prove the first Pogorelov-style interior $C^2$ estimates for this equation. This estimate will lead naturally to a Liouville theorem for quadratic growth solutions, generalizing the Liouville theorem for uniformly elliptic solutions \cite[][Corollary 2.4]{streets_evans-krylov_2016}.
        
        To define the equation and the associated family of admissible functions, we require a splitting $\R^n = \R^{n_+}\times \R^{n_-}$ where $(x^i)_{i=1}^{n_+}$ will be coordinates on the $\R^{n_+}$ factor and $(y^\ga)_{\ga = 1}^{n_-}$ on the $\R^{n_-}$ factor. For a connected open set $\Omega \subset \R^n$, the \textit{real tMA equation} is
	    \begin{equation}\label{eq:TMA}
	        \det(D_x^2u) = e^{f(x,y)}\det (-D_y^2u)\text{ in }\Omega
	    \end{equation}
        where $f\in C^\infty(\Omega)$.

        We will denote the linearized operator at the solution $u$ by $L$. It is given by
	    \begin{equation} \label{eq:linTMA}
		      L\phi \coloneq u^{ij}\phi_{ij} - u^{\ga\gb}\phi_{\ga\gb}.
	    \end{equation}
	    Therefore, (\ref{eq:TMA}) is strictly elliptic at a twice-differentiable function $u$ whenever it is \textit{split-convex}, i.e. $D_x^2u > 0$ and $D_y^2 u < 0$. As such, we define the \textit{set of admissible functions} to be
        \begin{equation}\label{eq:split-cvx}
            \mathcal{A}(\Omega) \coloneq \{u\in C^{\infty}(\Omega)| D_x^2u>0,\, D_y^2u<0\}.
        \end{equation}
        
        Whenever $\min\{n_+,n_-\} = 0$, this equation reduces to the usual Monge-Amp\`ere equation and (\ref{eq:split-cvx}) reduces to the set of either convex or concave functions. In other words, in the case of a degenerate factor in the splitting, the existence, regularity, and rigidity questions reduce to well-known fully nonlinear PDE theory \cite{Jorgens1954,calabi_improper_1958,pogorelov_improper_1972,caffarelli_dirichlet_1985-1,caffarelli-li,cheng-yau,evans_classical_1982,krylov_boundedly_1984}.

        The study of the real tMA began with Streets-Warren's paper \cite{streets_evans-krylov_2016} where they discovered a convexity principle strong enough to prove Evans-Krylov regularity in the presence of uniform ellipticity. Implicit in their work are natural analogues to the quantities used for the Pogorelov estimate \cite{pogorelov_improper_1972}. Namely, by way of the partial Legendre transform, they find that (\ref{eq:TMA}) can be transformed into a Monge-Amp\`ere equation for the PLT $\hatu$, and they find a matrix $W(D^2u)$ and a vector $(D_xu, -y)$ which correspond to the Hessian and gradient of $\hatu$. A result of Mooney and Savin on interior $C^2$ estimates for the PDE $(u_{11}u_{22})^{\frac{1}{2}} = 1$ similarly uses a test function motivated by the PLT \cite{mooney_regularity_2019}, but they are able to work purely with a modification of the gradient terms.
        
        Equation (\ref{eq:TMA}) sits in an interesting gap in the literature on fully nonlinear elliptic PDEs. In particular, the real tMA is neither Hessian nor concave. To be more explicit: 
        \begin{enumerate}
            \item Equation \ref{eq:TMA} is not \textit{Hessian}, i.e. not an equation of the eigenvalues of the Hessian matrix, and
            \item Equation \ref{eq:TMA} is not a \textit{concave} function of the Hessian matrix.
        \end{enumerate}
        This leads to two critical issues in the study of this equation. First, boundary estimates are very hard to obtain. And second, the standard interior $C^2$-estimate techniques and Evans-Krylov theory are not applicable.

        The reduced symmetry group that comes from failing to be a Hessian PDE makes obtaining boundary estimates very difficult. To see why, consider the seminal work of Caffarelli-Nirenberg-Spruck \cite{caffarelli_dirichlet_1985-1}. To obtain estimates on the tangent-normal part of the Hessian on the boundary of a strictly convex domain, the authors make crucial use of the fact that the equation is invariant under $O(n)$ and the orbits of non-zero vectors are spheres $\mathbb{S}^{n-1}$. By contrast, the real tMA equation's symmetry group is only $O(n_+)\times O(n_-)$ whose orbits are products of spheres $\mathbb{S}^{n_+ - 1}\times \mathbb{S}^{n_-1}$. Since the sphere product is codimension 2, it cannot be used to obtain estimates at points which e.g. have a mixed-type normal. The paper of Mooney-Savin \cite{mooney_regularity_2019} overcomes a related deficiency by way of an approximation scheme \`a la Lions \cite{lions}. In any case, an interior $C^2$-estimate is a crucial first step in any such program.
        
        To the author's knowledge, this is the first interior $C^2$-estimate for a non-uniformly elliptic, fully nonlinear, elliptic partial differential equation which is neither a function of the eigenvalues of the Hessian nor a concave function of the Hessian matrix in dimension larger than 2. To substantiate this claim, it is useful to review some nearby papers and place them in context.
        
        First, non-concave, \textit{uniformly elliptic} operators had seen systematic study in the 2000's by the Caffarelli school. Speaking very roughly, these papers focus on estimates in $C^{2,\ga}$. These estimates represent generalizations of the celebrated Evans-Krylov theorem \cite{evans_classical_1982,krylov_boundedly_1984} and have been incredibly important to the study of the special Lagrangian equation and the class of Isaacs-Bellman equations. Caffarelli and Yuan showed interior $C^{2,\ga}$-estimates for uniformly elliptic operators with convex level sets \cite{caffarelli_priori_2000}. Yuan was able to use geometric arguments to show interior $C^{2,\ga}$-estimates for the non-uniformly elliptic, non-convex special Lagrangian equation, but this equation is Hessian \cite{yuan}. Cabr\'e and Caffarelli obtained $C^{2,\ga}$-estimates for a minimum of a convex and a concave uniformly elliptic operator \cite{cabre_interior_2003}. Work on these types of estimates slowed somewhat after the counter-examples of Nadirashvili and Vl\u{a}du\c{t}, which demonstrated that even smooth, uniformly elliptic, non-concave equations admit non-classical solutions, thus obstructing some combination of the smooth Dirichlet problem and interior regularity \cite{nadirashvili_singular_2011}.
        
        Motivated by questions in complex differential geometry \cite{streets_pluriclosed_2015-1}, Streets and Warren proved an interior $C^{2,\ga}$-estimate for the real and complex tMAs under the assumption that the solution was uniformly elliptic. Collins, extending the methods of Cabr\'e-Caffarelli, then proved interior $C^{2,\ga}$-estimates for \textit{twisted-type equations} \cite{collins_c2alpha_2016}, of which real tMA is an example, under a uniform ellipticity hypothesis. Critically, we do not take uniform ellipticity as a hypothesis.

        Next, there is an ample body of literature on non-uniformly elliptic \textit{concave Hessian equations}. The smooth Dirichlet problem for equations of this type on convex domains was resolved in the foundational series of papers by Caffarelli-Nirenberg-Spruck and Caffarelli-Kohn-Nirenberg-Spruck \cite{caffarelli_dirichlet_1985-1,caffarelli_dirichlet_1985-2,caffarelli_dirichlet-3}. Their method rests on establishing global / boundary estimates using the invariance of the equation. Interior estimates for equations of this type go back to the work of J\"orgens ($n=2$) \cite{Jorgens1954}, Calabi ($n\leq 6$) \cite{calabi_improper_1958}, and Pogorelov ($1\leq n$) \cite{pogorelov_improper_1972} on the Monge-Amp\`ere equation. Cheng and Yau \cite{cheng-yau} refined this work. Similar results are established by Chou and Wang for the $\sigma_k$ equations \cite{chou_variational_2001} (see also \cite{warren-yuan} for a more geometric approach or \cite{guan-qiu_prescribing-scalar} with semi-convexity). Shankar-Yuan and Shankar have been able to obtain $C^2$ estimates without relying directly on concavity, but by using a doubling inequality and compactness \cite{shankar_regularity_2023,shankar-Yuan_Hessian,shankar-c2}. In these cases, concavity is often used implicitly to obtain a useful sign on bad third order terms. As mentioned before, we do not have concavity or Hessianness available to us. 
        
        Finally, there is much work on interior $C^2$-estimates for equations which are concave, but not Hessian. Mooney and Savin have studied the concave, but not Hessian, equation $(u_{11}u_{22})^{\frac{1}{2}}=1$ and have obtained interior $C^2$-estimates which fail for the higher dimensional version of the equation \cite{mooney_regularity_2019}. Chen has obtained interior $C^2$-estimates for a large class of concave equations involving lower order terms \cite{chen_local_2005}. 

        Our proof will rely crucially on the fact, established in \cite{streets_evans-krylov_2016}, that (\ref{eq:TMA}) is equivalent to $\det W(D^2u) = e^f$ for the matrix 
        $$W(D^2u) = \begin{pmatrix}
		D_x^2u - (D_{xy}^2u)^T(D_y^2u)^{-1}D_{xy}^2u & (D_{xy}^2u)^T(D_y^2u)^{-1}\\
		(D_y^2u)^{-1}D_{xy}^2u & -(D_y^2u)^{-1}
	    \end{pmatrix}.$$
        This complicated seeming matrix is really the pullback of the Hessian of the partial Legendre transform of $u$, i.e. $D^2_{(x,p)}\hatu(x,p)$. One might hope to simply apply the classical Pogorelov estimate in dualized coordinates, but control over the partial gradient map is equivalent to control on $D_y^2u$. We therefore carry out the estimate in the original coordinates using the analogous quantities.
        
        To obtain the interior estimate, we will expand Streets-Warren's subsolution property $LW\geq 0$ \cite{streets_evans-krylov_2016} to find the explicit form of the right-hand side in terms of $g$ and $W$. We will then apply a maximum principle argument to a modification of Pogorelov's test function, however $W$ and $g$ are not simultaneously diagonalizable so the calculations are substantially more involved. In addition, as the equation is not $O(n)$ invariant, we will not be able to rotate the eigenvector of the maximal eigenvalue to point in a coordinate direction. This argument will yield the main theorem.
	    \begin{theorem}\label{thm:main}
            Consider $0\in \Omega \subset \R^{n}$, an $x$-slice-convex open domain in $\R^n$. Suppose that $u \in \mathcal{A}(\Omega)$ is a solution of (\ref{eq:TMA}) normalized so that $u(0) = |Du(0)| = 0$ and $\hatu(x,y) = u(x,y) - y^T D_yu(x,y)$. If $S_r(0;\Omega)\Subset \Omega$, then the estimate
            \begin{equation*}
                \sup_{S_r(0)}\left(\left(\frac{\rho}{r^2}\right)^{\gs}|W(D^2u)|\right) \leq C(n,r,\|D_xu\|_{C^0(S_r)}, \diam_y S_r, \|f\|_{C^2(S_r)})
            \end{equation*}
            holds with weight function $\rho(x,y) \coloneq r^2 - \hatu(x,y)$. The exponent $\gs$ is given by
            \begin{equation*}
                \gs = 2 + 4\|D_y^2f\|_{C^0(S_r(0))}M_r,\quad M_r = \sup_{S_r}(|D_xu|^2 + |y|^2)
            \end{equation*}
            and both $\gs$ and the constant $C$ are invariant under elliptic rescalings $u(x,y) \mapsto \mu^{-2}u(\mu x, \mu y)$. See (\ref{def:Wmatrix}) for the explicit dependence of $W$ on $D^2u$.
	    \end{theorem}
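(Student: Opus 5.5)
The plan is a Pogorelov maximum-principle argument carried out in the original $(x,y)$ coordinates, with $W = W(D^2u)$ playing the role of the Hessian of $\hatu$ and $(D_xu,-y)$ playing the role of its gradient. On the section $S_r(0)$ we have $\rho = r^2-\hatu>0$ in the interior and $\rho = 0$ on $\del S_r$; $x$-slice-convexity guarantees that $S_r$ is the natural sublevel set of $\hatu$. We introduce the test function
$$\Phi(z,\xi) = \sigma\log\rho + \log\big(\xi^T W\xi\big) + \tfrac{1}{2}\big(|D_xu|^2+|y|^2\big), \qquad |\xi|=1,$$
and take its maximum over $\bar S_r\times\mathbb S^{n-1}$. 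The maximum is interior because $\rho\to 0$ at $\del S_r$. Since the equation is only $O(n_+)\times O(n_-)$-invariant, we cannot rotate $\xi$ onto $e_1$. Instead we fix the maximizing unit vector $\xi$ and work with the scalar function $w=\xi^TW\xi$, which is smooth near the maximum point $z_0$. At $z_0$ we use $D\Phi=0$ and $L\Phi\le 0$, with $L$ as in (\ref{eq:linTMA}).

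\textbf{Step 1: explicit subsolution identity.} Differentiate $\log\det W = f$ twice along $L$ and expand the Streets--Warren inequality $LW\ge 0$ into an exact identity. It should take the schematic form
$$L w = \xi^T\big(\mathcal{Q}(DW,DW)\big)\xi + \xi^T D^2_{(x,p)}f\,\xi,$$
where $\mathcal{Q}\ge0$ is a quadratic form in third derivatives built from $W^{-1}$ and the inverse of $g=\mathrm{diag}(D_x^2u,-D_y^2u)$. Pushing the $f$ term back to the original coordinates produces the factor $D_y^2f$ times $|y|^2$-type terms, which is where $M_r$ enters. We also need the lower-order identities: $L\hatu$ is a bounded quantity (roughly $n - y\cdot D_yf$-type terms), and
$$L\big(\tfrac12(|D_xu|^2+|y|^2)\big) \ge c\,\tr(W) - C.$$
The last bound comes from the fact that $u^{ij}u_{ik}u_{jk}$-type terms reproduce the diagonal of $W$. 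These play the role of $Lu=n$ and $L|Du|^2 \ge \Delta u$ in Pogorelov's argument.

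\textbf{Step 2: third-order terms.} At $z_0$ the gradient condition gives
$$\frac{Dw}{w} = -\sigma\frac{D\rho}{\rho} - D\Big(\tfrac12\big(|D_xu|^2+|y|^2\big)\Big).$$
The bad term in $L\log w$ is $-\langle Dw,Dw\rangle_L/w^2$, and it must be absorbed by the good term $\xi^T\mathcal Q\,\xi/w$ from Step 1. This is the main obstacle: $W$ and $g$ are not simultaneously diagonalizable, so we cannot use the usual eigenvalue bookkeeping. The first thing to try is a linear-algebra lemma. Since $\xi$ maximizes $\xi^TW\xi/|\xi|^2$, we have $W\xi = w\xi$, and therefore
$$\xi^T(\del W)W^{-1}(\del W)\xi \ \ge\ \frac{(\xi^T\del W\xi)^2}{w}.$$
Combined with the structure of $\mathcal Q$ (a Cauchy--Schwarz in the $L$-metric), this should give $\xi^T\mathcal Q\xi \ge \langle Dw,Dw\rangle_L/w$. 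The bad term is then controlled up to at most a quantity $\sigma\,|D\rho|_L^2/\rho^2$. That quantity is in turn absorbed by the $\sigma\log\rho$ contribution, using $L\rho$ together with the gradient condition again, exactly as in Pogorelov. This step also requires $\sigma\ge 2$ plus the term from $D_y^2f$; that is the source of $\sigma = 2+4\|D_y^2f\|M_r$.

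\textbf{Step 3: conclude.} At $z_0$, collect the inequalities to obtain $\rho\, w \le C$, and then upgrade this to the stated weighted bound $\rho^\sigma w\le C r^{2\sigma}$ by the standard argument. Here $|D\hatu|$ is controlled by $\|D_xu\|_{C^0}$ and $\diam_y$. Finally, check that every quantity is invariant under the scaling $u\mapsto\mu^{-2}u(\mu\cdot)$: both $W$ and $\rho/r^2$ are invariant, while $M_r\|D^2_yf\|$ scales correctly. Because the gradient term enters with $D_xu$ and $y$, the constant depends only on $\|D_xu\|_{C^0}$ and $\diam_y S_r$, not on $D_yu$.
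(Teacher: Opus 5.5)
\textbf{The gap is in Step 2.} Your inequality $\xi^T\mathcal{Q}\xi\ge |Dw|_g^2/w$ uses up the entire third-order good term against $-|Dw|_g^2/w^2$. What remains is $-\sigma|d\rho|_g^2/\rho^2$ from $L(\sigma\log\rho)$, and this term is not lower order. Indeed $|d\rho|_g^2=(d\hat u)^TW^{-1}(d\hat u)$ with $d\hat u=(D_xu,-y)$, so it blows up along the small eigenvalues of $W$, which nothing controls without uniform ellipticity. $L\rho=-(n-y_\alpha f_\alpha)$ cannot absorb it. Substituting the critical-point equation back in produces $\frac{2}{\sigma}|Dw|_g^2/w^2$, whose $v$-component $S(v,v,v)^2/\lambda^3$ appears with the same coefficient in $\xi^T\mathcal{Q}\xi/w$ and in $|Dw|_g^2/w^2$, so nothing is left to absorb it.

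The missing idea is the analogue of Pogorelov's split into $i=1$ and $i\neq 1$:
\begin{itemize}
\item Write $|d\rho|_g^2=\lambda^{-1}(i_vd\hat u)^2+|Pd\hat u|_W^2$, with $P$ the $W$-orthogonal projection off $v$.
\item The first piece is harmless: it carries a factor $1/\lambda$ and is at most $M/\lambda$.
\item For the second, compose the critical-point equation with $P\Psi^*$, so that only $|P\Psi^*d\log w|_W^2$ appears.
\item Then use the sharpened identity
\[
\frac{|S(v,\cdot,\cdot)|_W^2}{\lambda}-\frac{|S(v,v,\cdot)|_W^2}{\lambda^2}=\frac{|S(v,v,P\cdot)|_W^2}{\lambda^2}+\frac{|S(v,P\cdot,P\cdot)|_W^2}{\lambda},\qquad S_{ABC}=W_{AB,R}(\Psi_*)^R_C .
\]
\end{itemize}
This identity rests on the total symmetry of $S$, which gives $S(v,P\cdot,v)=S(v,v,P\cdot)=P\Psi^*dw$. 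Your plan never establishes or uses that symmetry. It is also exactly where $\sigma\ge 2$ enters: the identity leaves $(1-\frac{2}{\sigma})|S(v,v,P\cdot)|_W^2/\lambda^2\ge 0$.

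\textbf{Where $M_r$ enters $\sigma$, and why your test function fails.} Your account is wrong, and with your test function the argument does not close scale-invariantly. Since $\Psi_*v=(v^+,-\pi_-Wv)$, the term $D^2f(\Psi_*v,\Psi_*v)/\lambda$ contains $\lambda\,D_y^2f(v^-,v^-)$, not ``$|y|^2$-type'' terms. This bad term is linear in $\lambda$. Only the gradient function can absorb it, because $L\log\rho$ has no positive $\lambda$-term, so a large coefficient on $\log\rho$ does not help.

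With your unweighted $t=\frac12(|D_xu|^2+|y|^2)$ in the exponent, the good term is only $\mathrm{tr}\,W\ge\lambda$. Meanwhile the Young step on the critical-point equation leaves $-\frac{2}{\sigma}|dt|_g^2$, which can be as large as $\frac{2}{\sigma}\lambda M$. Closing the estimate would then require non-invariant conditions such as $\sigma\gtrsim M$ and $\|D_y^2f\|<1$.

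The weight on the gradient term must scale like $\sigma/M$. The paper achieves this with $\log\phi(t)$, $\phi=(1-t/M)^{-\sigma/2}$. This gives $\phi''/\phi=(1+\frac{2}{\sigma})(\phi'/\phi)^2$, so the gradient-squared terms cancel, and $\phi'/\phi\ge\sigma/(2M)$. Choosing $\sigma=2+4M\|D_y^2f\|$ then makes the coefficient of $(\rho^\sigma\lambda)^2$ after Young equal to $\frac{\sigma}{4M}-\|D_y^2f\|=\frac{1}{2M}>0$.
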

        
        We state the $f\equiv const.$ version as a corollary. It will end up being crucial for proving the Liouville theorem.
        \begin{corollary}\label{cor:fconst}
            When $f\equiv const.$, we have $\gs = 2$ and the estimate becomes 
            \begin{equation*}
                \sup_{S_r(0)}\left(\left(\frac{\rho}{r^2}\right)^{2}|W(D^2u)|\right) \leq C(n)\frac{M_r}{r^2}.
            \end{equation*}
        \end{corollary}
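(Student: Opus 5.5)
The corollary will follow by specializing Theorem~\ref{thm:main} and then using elliptic rescaling to pin down how the constant depends on $M_r$ and $r$. The first step is immediate. When $f\equiv const.$ we have $D_y^2 f\equiv 0$, so $\gs = 2 + 4\cdot 0\cdot M_r = 2$. For the same reason $\|f\|_{C^2(S_r)}$ reduces to $|f|$. I will then normalize the constant away: replace $u$ by an $x$-dilate $u(\lambda x,y)$, or use the natural scaling $x\mapsto \lambda x$ together with $u\mapsto c\,u$, so that $f\equiv 0$. Such a rescaling preserves $\mathcal{A}$. It multiplies $W$ only by fixed block factors, so it changes neither the form of the estimate nor the sections $S_r$, up to a relabelling that I will track. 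The upshot is that the constant in the theorem now depends only on $(n,r,\|D_xu\|_{C^0(S_r)},\diam_y S_r)$.

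The second step is to remove the dependence on $r$ by scaling. For $\mu=r$ set $v(x,y)=r^{-2}u(rx,ry)$. By the statement of the theorem, $\gs$ and $C$ are invariant under this rescaling. It sends $S_r(0;u)$ to $S_1(0;v)$, and it satisfies $\rho_v/1=\rho_u/r^2$ and $W(D^2v)=W(D^2u)$ at corresponding points. So it suffices to treat $r=1$, and the quantities $\|D_xv\|^2+\sup|y|^2$ on $S_1$ are exactly $M_r/r^2$.

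The third step is the main obstacle: extracting \emph{linear} dependence on $M_r/r^2$ rather than the unspecified $C(n,1,\dots)$. Here I would revisit the Pogorelov maximum-principle argument behind Theorem~\ref{thm:main} with $f\equiv0$ and $r=1$. There the test function is $\rho^2\,\lambda_{\max}(W)$, possibly times $e^{\frac12|(D_xu,-y)|^2}$, and the gradient-type quantity $(D_xu,-y)$, which plays the role of $D\hatu$, enters only through $|D_xu|^2+|y|^2\le M$. When $f$ is constant the $D^2f$ terms vanish. Also $L\rho$ equals, up to sign conventions, $\tr$ of $W^{-1}$ paired with $W$, which gives $n$. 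At the maximum point the inequality should then reduce to $\rho^2\lambda_{\max}\le C(n)\,(1+\sup|D\hatu|^2)$, i.e.\ $\le C(n)M$.

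To make the bound exactly homogeneous I would exploit one further scaling. The map $u\mapsto t^2u(x/t,y/t)$ combined with the $x$/$y$ anisotropic scalings $x\mapsto ax$, $y\mapsto by$ rescales $M$ and $W$ compatibly. Optimizing over this family converts a bound of the form $C(n)(1+M)$ into $C(n)M$. If the anisotropic scaling does not act cleanly on $W$, I would instead check directly that in the maximum-principle computation the lower-order constant terms carry a factor of $|D\hatu|^2$. Undoing the rescaling from the second step then gives $\sup_{S_r}(\rho/r^2)^2|W|\le C(n)M_r/r^2$.
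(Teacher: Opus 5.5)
\textbf{The gap is in your third step.} That step carries all of the corollary's content beyond $\sigma=2$, namely the linear dependence on $M_r/r^2$, and you assert it (``should then reduce to'') rather than derive it. The test function you guess would not deliver it.

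In the argument behind Theorem \ref{thm:main}, the critical-point equation is used to absorb $|d\rho|_g^2$. After that, a gradient factor $\phi(t)$, with $t=\frac12(|D_xu|^2+|y|^2)$, contributes
\[
\Bigl(\frac{\phi''}{\phi}-\bigl(1+\frac{2}{\sigma}\bigr)\bigl(\frac{\phi'}{\phi}\bigr)^2\Bigr)|dt|_g^2+\frac{\phi'}{\phi}\,Lt .
\]
For $\phi=e^{t}$ the first coefficient is $-2/\sigma$. Since $|dt|_g^2=(D\hatu)^TW\,D\hatu$ can be as large as $\lambda M$, this term can swamp the good term $Lt=\tr W\ge\lambda$ unless $\sigma\gtrsim M$. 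So you would not even keep exponent $2$, let alone get a bound linear in $M$.

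The paper instead takes $\phi(t)=(1-t/M)^{-\sigma/2}$. For this choice the coefficient above vanishes identically, $1\le\phi\le 2^{\sigma/2}$, and $\phi'/\phi\ge\sigma/(2M)$. Balancing the good term $\frac{\sigma}{2M}\lambda$ against the bad terms $\frac{\sigma n}{\rho}$ and $\frac{\sigma M}{\rho^2\lambda}$ is what produces linearity in $M$.

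In fact nothing needs to be redone. With $f$ constant every derivative of $f$ vanishes, so $I_3\ge 0$, $\sigma=2$, $C_2=0$ and $C_1=2(2n^2+1)M$. The final inequality (\ref{eq:final-est}) then reads
\[
\sup_{S_r}\rho^2\lambda\le 2\sqrt{2M\cdot 2(2n^2+1)M}\,r^2=4\sqrt{2n^2+1}\,M_r\,r^2 .
\]
Dividing by $r^4$ gives the corollary; this is the paper's proof, and your steps 1 and 2 are not needed.

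\textbf{Your scaling repair is also not coherent as written.} The anisotropic scaling that acts cleanly is $u\mapsto u(ax,y/a)$. It preserves $\hatu$, $\rho$ and the sections up to a linear change of variables, and sends $W\mapsto a^2W$ and $M\mapsto a^2M$. However, it replaces the constant $f$ by $f+2n\log a$. So it takes you out of the class $f\equiv0$ that you normalized to in step 1, while elliptic rescaling, which does keep $f$ fixed, leaves $M_r/r^2$ invariant.

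The scaling can therefore only be used if the bound is uniform in the value of the constant. That uniformity is not in the statement of Theorem \ref{thm:main}, whose constant depends on $\|f\|_{C^2}$ and hence on $|f|$. It is visible only in the proof.

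Granting that uniformity, your idea is a genuinely nice alternative. Apply the constant-$f$ theorem at $r=1$ to $u(ax,y/a)$ with $a^2=1/M$. This converts any bound depending on $n$ and $M$ into $F(n,1)\,M$, with no constants to track. But the ingredient it needs is exactly the proof-level information that already gives the explicit bound directly.

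The same issue affects step 1. The plain dilation $u(\lambda x,y)$ changes $W$ by a block factor and changes $M$ non-homogeneously, which would introduce dependence on the constant. By contrast, $u\mapsto\lambda^{-1}u(\lambda x,y)$ multiplies $W$ by $\lambda$ and leaves the form of the estimate invariant.
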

        When $n_- \equiv 0$ and $f = f(x)$, one sees that we recover the Pogorelov estimate for convex solutions of the Monge-Amp\`ere equation with a weaker exponent \cite[see][]{chou_variational_2001,figalli-2014}. 
        \begin{equation*}
            \sup_{S_r}((r^2-u)^{2}|D^2u|) \leq C(n)\|Du\|_{C^0(S_r)}^2r^{2}.
        \end{equation*}
        Our result can, therefore, be viewed as generalizing the Pogorelov estimate to tMAs with $n_- > 0$. 

        As an application of Corollary \ref{cor:fconst}, we strengthen the quadratic rigidity theorem of Streets-Warren \cite[][Theorem 1.1]{streets_evans-krylov_2016} by relaxing the hypothesis of global uniform ellipticity to a hypothesis of quadratic growth.
	    \begin{corollary}\label{cor:conditional-JCP}
		      Suppose that $u \in \mathcal{A}(\R^n)$ is a solution of (\ref{eq:TMA}) with $e^f\equiv c_0$, a positive constant, normalized so that $u(0) = |Du(0)| = 0$.  If there are constants $A,C>1$ such that both 
              \begin{align*}
            &\ |u|(x,y) \leq A(|x|^2+|y|^2),\\
            C^{-2}(|x|^2 + |y|^2)\leq&\ u(x,y) - y^TD_yu(x,y) \leq C^2(|x|^2 + |y|^2),
            \end{align*}
            then $u$ is a quadratic polynomial. 
	    \end{corollary}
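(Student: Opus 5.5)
We prove Corollary \ref{cor:conditional-JCP} by applying Corollary \ref{cor:fconst} on expanding sections, which gives a uniform bound on $W(D^2u)$ on $\R^n$. Once $W$ is bounded, uniform ellipticity of $u$ follows, and the rigidity theorem of Streets--Warren \cite[Theorem 1.1]{streets_evans-krylov_2016} then forces $u$ to be quadratic.

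\emph{Step 1: sections are comparable to balls.} The hypothesis $C^{-2}|z|^2\le \hatu(z)\le C^2|z|^2$, with $z=(x,y)$, gives $B_{r/C}(0)\subset S_r(0)\subset B_{Cr}(0)$, where $S_r(0)=\{\hatu<r^2\}$. Hence every $S_r$ is compactly contained in $\R^n$, and $\diam_y S_r\le 2Cr$. On $S_{r/2}$ we have $\rho\ge \tfrac34 r^2$, so $\rho/r^2$ is bounded below there.

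\emph{Step 2: gradient bound.} We need $M_r=\sup_{S_r}(|D_xu|^2+|y|^2)\le C' r^2$. The $|y|^2$ term is at most $C^2r^2$ by Step 1. For $D_xu$, fix $y$. Then $x\mapsto u(x,y)$ is convex with $|u|\le A(|x|^2+|y|^2)$. For $|z|\le Cr$, the standard convexity estimate comparing $u$ on the ball of radius $2Cr$ in the $x$-slice gives
\[
|D_xu(x,y)|\le \frac{\operatorname{osc}_{|x'|\le 2Cr} u(\cdot,y)}{Cr}\le C''A\,r .
\]
So $M_r\le C'(A,C)r^2$.

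\emph{Step 3: uniform bound on $W$.} Since $f$ is constant, Corollary \ref{cor:fconst} gives
\[
\tfrac{9}{16}\,|W(D^2u)|\le C(n)\frac{M_r}{r^2}\le C(n)C' \quad\text{on } S_{r/2}.
\]
Let $r\to\infty$. Because $S_{r/2}\supset B_{r/(2C)}$ exhausts $\R^n$, we get $|W|\le K$ on all of $\R^n$. The equation says $\det W=c_0$, so the eigenvalues of $W$ are also bounded below, by $c_0K^{1-n}$. Thus $\Lambda^{-1}I\le W\le \Lambda I$ globally.

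\emph{Step 4: from $W$ back to $D^2u$.} This is the main obstacle: we must check that two-sided bounds on $W$ give two-sided bounds on $D^2u$, i.e.\ uniform ellipticity.

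The lower-right block of $W$ is $-(D_y^2u)^{-1}$, and it is pinched between $\Lambda^{-1}$ and $\Lambda$. Hence $-D_y^2u$ is pinched between $\Lambda^{-1}$ and $\Lambda$.

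The off-diagonal block satisfies $|(D_y^2u)^{-1}D_{xy}^2u|\le\Lambda$. Combined with the bound on $D_y^2u$, this gives $|D_{xy}^2u|\le\Lambda^2$.

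For the upper-left block, the Schur complement $D_x^2u - (D^2_{xy}u)^T(D_y^2u)^{-1}D^2_{xy}u$ lies between $\Lambda^{-1}$ and $\Lambda$. The subtracted term is nonpositive and bounded, so $D_x^2u\le \Lambda+\Lambda^5$ and $D_x^2u\ge$ the Schur complement $\ge\Lambda^{-1}$.

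Therefore $|D^2u|$ is bounded and $D^2_xu$, $-D^2_yu$ are uniformly positive. Equivalently, the linearized operator $L$ is uniformly elliptic on $\R^n$.

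\emph{Step 5: conclusion.} $u$ is now a globally uniformly elliptic entire solution of (\ref{eq:TMA}) with constant right-hand side. The Streets--Warren Liouville theorem \cite[Theorem 1.1, Corollary 2.4]{streets_evans-krylov_2016} applies and shows that $u$ is a quadratic polynomial.

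If the exact hypotheses of that theorem differ (for example, if it asks for bounded Hessian rather than an ellipticity bound), the fallback is direct. Apply their Evans--Krylov $C^{2,\alpha}$ estimate on balls $B_R$ to the rescalings $u_R(z)=R^{-2}u(Rz)$; these all have the same Hessian bounds. This gives $[D^2u]_{C^\alpha(B_R)}\le CR^{-\alpha}$, and letting $R\to\infty$ shows $D^2u$ is constant.
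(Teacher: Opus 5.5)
Your Steps 1--3 and 5 follow the paper's argument essentially verbatim: ball trapping of sections, the slice-convexity gradient bound giving $M_r\le C'r^2$, Corollary~\ref{cor:fconst} on $S_r$ read off on $S_{r/2}$ where $\rho\ge\frac34 r^2$, the lower eigenvalue bound from $\det W=c_0$, and then the Streets--Warren Liouville theorem. The one step that fails is the lower bound on $D_x^2u$ in Step 4, where the sign of the Schur-complement correction is reversed. Because $D_y^2u<0$, the term $N:=(D^2_{xy}u)^T(D_y^2u)^{-1}D^2_{xy}u$ is negative semidefinite, so
\[
W_{++}=D_x^2u-N\ \ge\ D_x^2u .
\]
The block decomposition therefore gives $0<D_x^2u\le W_{++}\le\Lambda I$, which is better than your $\Lambda+\Lambda^5$. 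It gives no positive lower bound, however. Your claim $D_x^2u\ge W_{++}\ge\Lambda^{-1}I$ is false in general, and $D_x^2u=W_{++}+N$ only yields $D_x^2u\ge(\Lambda^{-1}-\Lambda^5)I$. Without a lower bound on $D_x^2u$ the coefficients $u^{ij}$ of $L$ are uncontrolled, so at that point you have not yet established uniform ellipticity.

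The fix is short, and there are two ways to do it. The paper uses the equation: $-D_y^2u\ge\Lambda^{-1}I$ gives $\det D_x^2u=c_0\det(-D_y^2u)\ge c_0\Lambda^{-n_-}$. Together with $D_x^2u\le\Lambda I$, this forces $\lambda_{\min}(D_x^2u)\ge c_0\Lambda^{-n_-}\Lambda^{-(n_+-1)}$. Alternatively, you can work purely with $W$. By (\ref{def:Wmatrix2}), the $++$ block of $W^{-1}$ is exactly $(D_x^2u)^{-1}$; that is, $W^{ij}=u^{ij}$, which follows from $W^{-1}=\Phi_*g^{-1}\Phi_*^T$. Then $W\ge\Lambda^{-1}I$ gives $W^{-1}\le\Lambda I$, hence $(D_x^2u)^{-1}\le\Lambda I$ and $D_x^2u\ge\Lambda^{-1}I$. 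With either repair, the rest of your proof goes through.
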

        The proof is relatively standard. The split-convexity implies a gradient bound which, when taken with the hypotheses, will imply that $M_r = O(r^2)$. Then an application of Corollary \ref{cor:fconst} on nested sections and the fact that $\rho^2 = O(R^4)$ will imply the uniform bound, which can be plugged directly into the Streets-Warren theorem.
        
        The hypotheses of Corollary \ref{cor:conditional-JCP} reduce  simply to the assumption of quadratic growth of $u$ whenever $n_- = 0$ (the Monge-Amp\`ere case). The proof of this growth rate is central to, for example, Caffarelli-Li's extension of the J\"orgens-Calabi-Pogorelov theorem \cite{caffarelli-li}. Similar sorts of growth hypotheses are common in the Liouville theorem literature \cite[see e.g.][]{mooney_regularity_2019,warren-yuan2,ecker_bernstein_1990}. 
        
        We also note that the bound on $|u|$ does not imply the growth hypothesis on $u-y^TD_yu$. Even in the $n_+ = n_- = 1$ case,
        $$u(x,y) = \sqrt{1+x^2} - \sqrt{1+y^2}$$
        is split-convex and quadratically trapped without having $u-y^TD_yu$ of quadratic growth.
        
        However, many of the usual sorts of test functions are either inadmissible or rigid, the author is tempted to pose the question:
        \begin{question}
            Do there exist classical non-quadratic entire solutions of (\ref{eq:TMA})?
        \end{question}

\section{Background \& Notation}
	
    \subsection{Partial Legendre Transform Review}\label{subsec:PLT}
    The intent of this subsection is to introduce the partial gradient map and motivate the $W$ matrix and several other quantities that will show up in test functions later. It will be seen that all of these quantities have natural interpretations in the image coordinates $(x,p)$.

    In what follows, $\Omega \subset \R^{n_+}\times \R^{n_-}$ is an open domain and $\pi_{\pm}:\Omega \to \R^{n_\pm}$ is the linear projection map. Given a function $u \in C^1(\Omega)$, the \textit{partial gradient map} is the mapping
	\begin{align}\label{eq:PLTdiffeo}
		\Phi_u: \Omega & \to \R^{n_+}_x \times \R^{n_-}_p\\
		(x,y)& \mapsto (x,p(x,y)) = (x, D_yu(x,y)) \nonumber.
	\end{align}
    
    The Jacobian matrix of $\Phi$ is
	$$\Phi_* = \begin{pmatrix}
		I & 0\\
		D_{xy}^2u & D_y^2 u
	\end{pmatrix}.$$
	Notice that whenever $u\in \mathcal{A}(\Omega)$ and $\Omega$ is convex in the $y$-directions, the map $\Phi$ is differentiable and injective. 
    \begin{lemma}\label{lem:inj}
    $\Phi$ is an embedding whenever $\Omega$ is $x$-slice-convex, i.e. $\Omega_x \coloneq \pi_+^{-1}(x)\cap \Omega$ is convex for all $x$ such that it is non-empty.
        \begin{proof}
            The function $v(y) = u(x,y)$ is a strictly concave function defined on $\Omega_x$, a convex set. Thus, for all $y\neq y'\in \Omega_x$
            $$v(y') < v(y) + D_yv(y)^T(y'-y).$$
            Interchanging $y$ and $y'$ yields the monotonicity of the gradient
            \begin{equation}
                0 > (y-y')^T(D_yv(y) - D_yv(y')), \forall y\neq y'\in \Omega_x.    
            \end{equation}
            In terms of $u$, this is
            \begin{equation}\label{eq:grad-monot}
                0 > (y-y')^T(D_yu(x,y) - D_yu(x,y')), \forall (x,y)\neq (x,y')\in \Omega.    
            \end{equation}

            If $(x,y),(x',y')\in \Omega$ so that $\Phi(x,y) = \Phi(x',y')$, then $x=x'$ and $D_yu(x,y) = D_yu(x',y')$. But by (\ref{eq:grad-monot}), we see immediately that $y=y'$. Thus $\Phi$ is 1-1 on $\Omega$.

            Since $\Phi$ is an injective local diffeomorphism, it must be an embedding.
        \end{proof}
    \end{lemma}
    
    Therefore, the inverse map $\Psi : \Phi(\Omega) \ni (x,p) \mapsto (x,y(x,p))\in \Omega$ must exist and, by the Inverse Function Theorem, it must be differentiable with Jacobian matrix
	$$\Psi_* = (\Phi_*)^{-1} = \begin{pmatrix}
		I & 0\\
		-(D_y^2 u)^{-1}D_{xy}^2u & (D_y^2u)^{-1}
	\end{pmatrix}.$$

    Thus, we can use $\Psi$ to define the \textit{partial Legendre transform based at $y_0$} of $u\in\mathcal{A}(\Omega)$ as the function
        \begin{equation*}
        \hatuy(x,p) \coloneq u(x,y(x,p)) - (y(x,p) - y_0)^Tp.
    \end{equation*}
    This, along with the chain rule, makes it possible to compute the $(x,p)$-differential and Hessian of the transformed function (see \cite{streets_evans-krylov_2016} for details).
    \begin{align}
        d\hatuy(x,p) =&\ \begin{pmatrix}
		D_xu & (y_0 - y) \end{pmatrix} \label{eq:xp-differential}\\
        (D^2_{(x,p)}\hatuy)(x,p) =&\ \begin{pmatrix}
		D_x^2u - (D_{xy}^2u)^T(D_y^2u)^{-1}D_{xy}^2u & (D_{xy}^2u)^T(D_y^2u)^{-1}\\
		(D_y^2u)^{-1}D_{xy}^2u & -(D_y^2u)^{-1}
	   \end{pmatrix}
    \end{align}
    Notice that the Hessian of $\hatuy$ at $p = p(x,y)$ is a coordinate change away from a positive matrix of second derivatives of $u \in \mathcal{A}(\Omega)$, i.e.
	\begin{equation}\label{eq:pos}
		(\Psi_*)^T\begin{pmatrix}
		D_x^2u(x,y) & 0\\
		0 & -D_y^2 u(x,y)
	\end{pmatrix}\Psi_* = D_{(x,p)}^2\hatu(x,p).
	\end{equation}
	Since these matrices transform as quadratic forms, we will treat them as Riemannian metrics, i.e.
    \begin{equation}\label{def:Wmatrix}
	   g \coloneq \begin{pmatrix}
	       	D_x^2u & 0\\
		      0 & -D_y^2u
	   \end{pmatrix}, \quad W \coloneq \begin{pmatrix}
	       D_x^2u - (D_{xy}^2u)^T(D_y^2u)^{-1}D_{xy}^2u & (D_{xy}^2u)^T(D_y^2u)^{-1}\\
	       (D_y^2u)^{-1}D_{xy}^2u & -(D_y^2u)^{-1}
	\end{pmatrix}.
    \end{equation}
	
	Moreover, (\ref{eq:pos}) can be rewritten in terms of pullbacks as $\Psi^*g = W$, telling us that $g > 0$ (i.e. $u$ is strictly split-convex in $(x,y)$-space) iff $W>0$ (i.e. $\hatu$ is strictly convex in $(x,p)$-space) and also that
	\begin{equation}\label{eq:PLTform}
		\det W = (\det \Psi_*)^2 \det g = \frac{\det D_x^2u}{\det(-D_y^2 u)}.
	\end{equation}
	Thus, $u\in \mathcal{A}(\Omega)$ solves (\ref{eq:TMA}) in $\Omega$ iff $\hatu$ is a strictly convex solution of $\det D^2\hatu =e^{f\circ \Psi} > 0$. 
    
    The correspondence to the Monge-Amp\`ere equation is central to Streets and Warren's Evans-Krylov theorem for the twisted Monge-Amp\`ere equations \cite{streets_evans-krylov_2016}. They are able to leverage the concavity of the PLT equation to get that $LW\geq 0$, a subsolution condition on $W$, from which they run the usual Evans-Krylov-style argument.

    \subsection{Notation}
    Repeated indices given by capital Latin letters, e.g. $A,B,C$, are to be summed over the index set $\{1,...,n\}$, i.e. one sums over the whole $\R^n$. Lower case indices are summed only over part of the splitting. For latin letters, e.g. $i,j,k$, the sum should be taken over the index set $\{1,...,n_+\}$, i.e. only over a basis for $\R^{n_+}$. Conversely, lowercase greek letters should be summed over $\{1,...,n_-\}$.

    In this notation, the exterior derivative is written 
    $$df = f_i dx^i + f_\ga dy^\ga.$$
    We will use fairly standard differential geometry notation to denote components and derivatives, e.g.
    $$S_{i\ga k, \gb} = \frac{\del}{\del y^\gb} S(\frac{\del}{\del x^i}, \frac{\del}{\del y^\ga},\frac{\del}{\del x^k})$$
    so the indices correspond to evaluation and commas denote partial derivatives.

    In addition, $W$ and $g$ will always refer to the matrices of second derivatives of $u$ given by
    \[
	   g = \begin{pmatrix}
	       	D_x^2u & 0\\
		      0 & -D_y^2u
	   \end{pmatrix}, \quad W = \begin{pmatrix}
	       D_x^2u - (D_{xy}^2u)^T(D_y^2u)^{-1}D_{xy}^2u & (D_{xy}^2u)^T(D_y^2u)^{-1}\\
	       (D_y^2u)^{-1}D_{xy}^2u & -(D_y^2u)^{-1}
	\end{pmatrix}.
   \]
   In these formulas, $D_{xy}^2u$ is to be viewed as an $n_- \times n_+$ matrix, i.e. as a linear transformation $\R^{n_+}\to \R^{n_-}$.
   
    Since $g$ is identified with the ``pure-type" Hessian of $u$, we note that $g>0$ implies that $D_x^2u$ and $D_y^2u$ are both invertible. Due to this, we will reserve superscripts on $u$ to refer to elements of the inverse pure-type Hessians. To be specific, 
    $$g = \begin{pmatrix}
        u_{ij} & 0 \\
        0 & -u_{\ga\gb}
    \end{pmatrix}, \quad g^{-1} = \begin{pmatrix}
        u^{ij} & 0 \\
        0 & -u^{\ga\gb}
    \end{pmatrix}.$$
    
    We will use a slightly different convention for superscript indices on $W$ and $g$. These will refer to components of $W^{-1}$ and $g^{-1}$, i.e.
    $$W=(W_{AB}),\, g=(g_{AB}) \quad W^{-1} = (W^{AB}),\, g^{-1}= (g^{AB}).$$
    Both of these conventions show up in the coordinate expression of $W$, but they are standard in the literature (see either \cite{streets_evans-krylov_2016,garcia-fernandez_generalized_2021})
   
    $X\Subset Y$ indicates $X \subset Y$ and $\bar{X}$ is compact in $Y$.

    As we are aiming to prove the estimate purely using calculations in $\Omega$, we will introduce a small abuse of notation. Namely,
    \begin{equation*}
        \hatuy(x,y) \coloneq \hatuy(x,p(x,y)) = u(x,y) - (y-y_0)^TD_yu(x,y)
    \end{equation*}
    and we will drop the coordinates when they can be inferred from context. This sort of abuse of notation is standard in differential geometry.

    Also, $\Omega_x \coloneq \pi^{-1}_+ (x) \cap \Omega$ whenever this is non-empty.

    We will use the geometer's notation for Jacobians, namely $\Phi_*$ will denote the action of the Jacobian on a vector, and $\Phi^* = (\Phi_*)^T$ is the adjoint action on a covector.
    
	\section{Sections}
    We will define a section in the vein of Caffarelli.
    \begin{definition}
        If $u \in \mathcal{A}(\Omega)$ and $(x_0,y_0)\in \Omega$, then we define \textit{the section of $u$ of radius $r$ centered at $(x_0,y_0)$}, which will be denoted by $S_r((x_0,y_0);\Omega)$, to be the connected component of the set 
        $$\{(x,y) \in \Omega | \hatuy(x,y) < \hatuy(x_0,y_0) + (x-x_0)^TD_xu(x_0,y_0) + r^2\}$$
        containing the point $(x_0,y_0)$. We will often drop $\Omega$ and $(x_0,y_0)$ when they can be inferred.
    \end{definition}

    \begin{remark}
        It will often be convenient to assume that $\Omega$ is \textit{$x$-slice-convex}. By this, we mean that $\Omega_x \coloneq \pi_+^{-1}(x) \cap \Omega$ is a convex set.
    \end{remark}

    \begin{lemma}\label{lem:xp-cnvx}
        If $\Omega$ is $x$-slice-convex and $S_r((x_0,y_0);\Omega) \Subset \Omega$, then $\Phi(S_r((x_0,y_0);\Omega))$ is convex.
        \begin{proof} 
            Fix $(x,y) \in S_r$. Consider the set $R$ of points which can be reached by a straight-line segment starting at $\Phi(x,y)$ through $\Phi(S_r)$.
            $$R = \{(x',y') \in S_r |\, \gg(t) \coloneq (1-t) \Phi(x,y) + t\Phi(x',y') \in \Phi(S_r), \, \forall t\in [0,1]\}.$$

            Then $R\neq \emptyset$. Simply take $\gg(t)$ equal to the constant curve and one finds that $(x,y) \in R$.

            Next, to see that $R$ is open, let $(x',y') \in R$. Then there is a straight line $\gg_1(t)$ from $\Phi(x,y)$ to $\Phi(x',y')$. However, since $\Phi$ is an embedding, $\Phi(S_r)$ is open. Moreover, $\gg_1([0,1])$ is compact. Thus the distance from $\gg_1([0,1])$ to the boundary $\del \Phi(S_r)$ must be strictly positive, say $\ep > 0$. By continuity of $\Phi$ and openness of $S_r$, there must be a choice of $\gd > 0$ so that $B_\gd(x',y')\subset S_r$ and 
            $$\Phi(B_{\gd}(x',y') \cap S_r)\subset B_{\frac{\ep}{2}}(\Phi(x',y')) \subset \Phi(S_r).$$

            This is fortunate because whenever $(x'',y'') \in B_{\gd}(x',y')$, then the curve $\gg_2(t) = (1-t) \Phi(x,y) + t \Phi(x'',y'')$ on $[0,1]$ stays close to $\gg_1(t)$, i.e.
            $$\dist(\gg_1(t),\gg_2(t)) = t|\Phi(x',y') - \Phi(x'',y'')| \leq \frac{\ep}{2}.$$
            But since $\ep$ was the distance from $\gg_1(t)$ to the boundary, we must have $\gg_2(t) \subset \Phi(S_r)$ for all $t\in [0,1]$. Thus, $B_\gd(x',y') \subset R$, i.e. $R$ is open.

            To see that $R$ is closed, take a sequence of points $\{(x_i,y_i)\} \subset R$ which converge in $S_r$ to a limit $(x',y')$. Since each $(x_i,y_i) \in R$, the lines 
            $$\gg_i(t) = (1-t) \Phi(x,y) + t \Phi(x_i,y_i), \quad \forall t\in [0,1]$$
            belong to $\Phi(S_r)$.

            But since $\{(x_i,y_i)\}$ is convergent, so is $\{\Phi(x_i,y_i)\}$ by continuity. Thus $\{\Phi(x_i,y_i)\}$ is Cauchy. Therefore, $\{\gg_i(t)\}$ is uniformly Cauchy, i.e.
            $$\sup_{[0,1]}|\gg_i(t) - \gg_j(t)| = \sup_{[0,1]}t|\Phi(x_i,y_i) - \Phi(x_j,y_j)| \to_{i,j\to\infty} 0.$$
            By the completeness of $C^0[0,1]$, we know that $\{\gg_i(t)\}$ converges to a limit $\gg(t)$ satisfying
            $$\gg(t) = (1-t) \Phi(x,y) + t \Phi(x',y'), \quad t\in[0,1].$$
            Moreover, we know that $\gg([0,1]) \subset \Phi(\bar{S_r}) \subset \Phi(\Omega)$ with $\gg(0), \gg(1) \in \Phi(S_r)$.

            So, if we let 
            $$h(x,p) \coloneq \hatuy(x,p) - \hatuy(x_0,y_0) - (x-x_0)^TD_xu(x_0,y_0),$$
            then by split-convexity we know
            $$\frac{d^2}{dt^2}h\circ \gg(t) = \gg'(t)^TD_{x,p}^2\hatuy(\gg(t)) \gg'(t) = \gg'(t)^T W \gg'(t) > 0,$$
            since $\gg$ is non-constant. Therefore, $h \circ \gg(t)$ must be a convex function of $t$ on $[0,1]$. This implies
            $$h\circ\gg(t) = h((1-t)\Phi(x,y) + t \Phi(x',y')) \leq (1-t)h\circ \gg(0)+ t h\circ \gg(1) < r^2.$$ 
            Since $h\circ \gg(t) < r^2$ for all $t\in [0,1]$ and $\gg([0,1])$ is connected and intersects $\Phi(S_r)$, we must have $(x',y') \in R$.

            Since $S_r$ is connected by definition, we have proven the lemma.
        \end{proof}
    \end{lemma}     
	
	\section{Interior Second Derivative Estimate (Proof of Theorem \ref{thm:main})}
	Given the relationship in (\ref{eq:PLTform}), we seek a Pogorelov-style interior $C^2$ estimate for strictly split-convex solutions of (\ref{eq:TMA}). Since the section $S_r(0;\Omega)$ is defined with respect to an $x$-slice-convex set $\Omega$---we may apply Lemma \ref{lem:xp-cnvx}. Therefore, if $(x,p),(x',p')\in \Phi(S_r)$ we have the tangent plane inequality, 
	$$\hatuy(x',p') \geq \hatuy(x,p) + (p'-p)^TD_p\hatuy(x,p) + (x'-x)^TD_x\hatuy(x,p).$$
    This inequality is strict whenever $(x,p)\neq (x',p')$. Pulling this back by $\Phi$, via Lemma \ref{lem:inj}, we know that for every $(x,y),(x',y') \in S_r(0)$
    \begin{equation}\label{eq:strsplconv}
        \hatuy(x',y') \geq \hatuy(x,y) + (D_yu(x',y') - D_yu(x,y))^T(-y) + (x'-x)^T D_xu(x,y)
    \end{equation}
    with the strict inequality whenever $(x,y) \neq (x',y')$.
    
    Thus, if we normalize the solution so that $u(0) = |Du(0)| = 0$, then we may apply (\ref{eq:strsplconv}) to obtain positivity
    \begin{equation}\label{eq:hatu-pos}
    \hatu(x,y) \geq \hatu(0,0) + (D_yu(x,y))^T(0-0) + (x-0)^T D_xu(0,0) = 0
    \end{equation}
    for every $(x,y) \in S_r(0)$, where the inequality is strict away from $0$.
    
    A natural choice of objective function in this case is then
	$$\tilde{Q}(x,y,w) = \sigma\log\rho(x,y) + \log \phi(\frac{1}{2}(|D_xu|^2 + |y|^2)) + \log w^TW(D^2u)w$$
    where $(x,y) \in S_r(0;\Omega)$, $\rho(x,y) = r^2 - \hatu(x,y)$, and $w = w^+ + w^- \in \mathbb{S}^{n-1} \subset \R^{n_+}\times \R^{n_-}$. Before expanding on the function much more, it is worthwhile to notice that this is precisely the ``dual" of the usual Pogorelov test function \cite{pogorelov_improper_1972}. In $\tilde{Q}$, $W$ is the pull back of the Hessian of the PLT function $D^2_{(x,p)}\hatu(x,p)$, $|D_xu|^2 + |y|^2$ is its gradient norm $|D_(x,p)\hatu(x,p)|^2$, and $\rho$ is the obvious choice. 
    
    Moreover, since $\bar{S}_r(0)\times \mathbb{S}^{n-1}$ is compact and $e^{\tilde{Q}}$ can be continuously extended to $0$ on $\del S_r \times S^{n-1}$, a positive maximum must be obtained at a point $(x',y',v) \in S_r(0)\times \mathbb{S}^{n-1}$ in the interior. Since $e^{\tilde{Q}}$ is positively proportional to the Rayleigh quotient at every point, $v$ must be a unit-length eigenvector of $W(D^2u)(x',y')$ corresponding to $\gl$, the maximal eigenvalue of $W(D^2u)(x',y')$. We will proceed by freezing $w = v$ and considering instead the function $Q(x,y) = \tilde{Q}(x,y,v)$ which is still maximized at $(x',y')$. The author would like to note the influence of several works on Monge-Amp\`ere and Hessian equations \cite{chou_variational_2001, guan-qiu_prescribing-scalar,shankar_regularity_2023,yuan_monotonicity_2023} which motivated the construction of this test function.

    One last note before undertaking the calculation. The third order terms in the Monge-Amp\`ere case are typically dealt with by rotating and diagonalizing so that the eigendirection is a coordinate direction. This will not be possible in our case. Additionally, it is not possible to simultaneously diagonalize $g$ and $W$ in our case, whereas in the Monge-Amp\`ere case these are both the usual Hessian and can be diagonalized accordingly. Therefore, to get a helpful sign on the third order terms, we are required to do some more involved linear algebra. This is located in Section \ref{subsec:third-fourth}.

    \subsection{Proof of Theorem \ref{thm:main}}	
	We begin by stating a few useful formulas for the linearized operator applied to various functions appearing in $Q$.
	\begin{align}\label{eq:L-of}
	 	&Lu = n_+ - n_-, \nonumber\\
		&Lu_P = f_P, \\
		&L\hatu(x,y) = n - y_\ga f_\ga, \nonumber\\
		&Lu_{PQ} = u^{ar}u^{bs}u_{abP}u_{rs Q} - u^{\ga\gg}u^{\gd\gb}u_{\ga\gb P} u_{\gg\gd Q} + f_{PQ}. \nonumber
	\end{align}

    Now, we differentiate the objective function twice, and since we may diagonalize $g$ at a maximum of $Q$, it suffices to compute with repeated indices
    \begin{align}
        Q_A =&\ \gs\frac{\rho_A}{\rho} + \frac{\phi_A}{\phi} + \del_A \log v^TWv \label{eq:grad}\\
        Q_{AA} =&\ \gs \left[\frac{\rho_{AA}}{\rho} - \frac{\rho_A^2}{\rho^2}\right] + \left[\frac{\phi_{AA}}{\phi} - \frac{\phi_A^2}{\phi^2}\right] + \del^2_A \log v^TWv.\label{eq:hess}
    \end{align}
    
    As $\rho = r^2 - \hatu(x,y)$, we have
 	$$LQ = \gs \left[-\frac{n - y_\ga f_\ga}{\rho} - \frac{|d\rho|_g^2}{\rho^2}\right] + g^{AA}\left[\frac{\phi_{AA}}{\phi} - \frac{\phi_A^2}{\phi^2}\right] + L \log v^TWv.$$
    
    We will ultimately be using the critical point condition to cancel the bad $|d\rho|_g^2$ terms. To do this, we first need to expand and polarize $|d\rho|_g^2$ in the Legendre transform coordinates. This will be done by using the Legendre transform map and the conjugacy of $g$ and $W$, more directly
    \begin{equation}\label{eq:coord-rewrite}
        d\rho(x,y) = -\Phi^* d\hatu(x,p).
    \end{equation}

    Working at a maximum of $Q$, we know that $v$ is a unit-Euclidean-length eigenvector of $W$ corresponding to the maximal eigenvalue $\gl$. Define $P = I - vv^T$ to be the $W,\, \gd$-orthogonal projection onto $v^\perp$ (via the Spectral Theorem). One can decompose the norm of $d\rho$ as
    \begin{equation} \label{eq:orthog-decomp}
        |d\rho|_g^2 = \frac{1}{\gl}(i_v d \hatu(x,p))^2 + \left|Pd\hatu(x,p)\right|^2_{W}
    \end{equation}

    Plugging the decomposition (\ref{eq:orthog-decomp}) into $LQ$ and reorganizing, we find
    \begin{align*}
         LQ =&\ \gs \left[-\frac{n - y_\ga f_\ga}{\rho} - \frac{1}{\rho^2 \gl}\left(i_v d \hatu(x,p)\right)^2\right] + g^{AA}\left[\frac{\phi_{AA}}{\phi} - \frac{\phi_A^2}{\phi^2}\right]\\
        &\ + \frac{L(v^TWv)}{\gl} - |d\log v^TWv|_g^2 - \frac{\gs}{\rho^2}\left|Pd\hatu(x,p)\right|^2_{W}.
    \end{align*}

    At a maximum, $dQ = 0$, so that
    $$0 = \gs\frac{d\rho}{\rho} + \frac{d\phi}{\phi} + d \log v^TWv.$$
    Composing with $P\Psi^*$ on the left gives 
    $$0 = -\frac{\gs}{\rho}Pd\hatu(x,p) + P\Psi^*\frac{d\phi}{\phi} + P\Psi^* d\log v^TWv.$$
    Rearranging and taking the one-form norm using $W^{-1}$, we may apply Young's inequality to obtain 
    $$\frac{\gs}{\rho^2}\left|Pd\hatu(x,p)\right|^2_{W} \leq  \frac{2}{\gs}\left|P\Psi^*\frac{d\phi}{\phi}\right|_{W}^2 + \frac{2}{\gs}|P\Psi^* d\log v^TWv|_{W}^2.$$
    Since $P$ is a $W$-orthogonal projection, we know that 
    $$\left|P\Psi^*\frac{d\phi}{\phi}\right|_{W}^2 \leq \left|\Psi^*\frac{d\phi}{\phi}\right|_{W}^2 = \frac{|d\phi|_g^2}{\phi^2}.$$
    
    Plugging this in yields
    \begin{align*}
        0\geq LQ \geq &\ \gs \left[-\frac{n - y_\ga f_\ga}{\rho} - \frac{1}{\rho^2 \gl}(i_v d \hatu(x,p))^2\right] + g^{AA}\left[\frac{\phi_{AA}}{\phi} - (1 + \frac{2}{\gs})\frac{\phi_A^2}{\phi^2}\right]\\
        &\ + \frac{L(v^TWv)}{\gl} - |d\log v^TWv|_g^2 - \frac{2}{\gs}|P\Psi^* d\log v^TWv|_{W}^2\\
        = &\ I_1 + I_2 + I_3
    \end{align*}
    This can be simplified further using the estimate
    \begin{equation}
        I_3 \geq -\frac{\|D_x^2f\|_{C^0}}{\gl} - 2\|D_{xy}^2f\|_{C^0} - \|D_y^2f\|_{C^0}\gl \label{eq:third-fourth}
    \end{equation}
    which will be shown in \S \ref{subsec:third-fourth} provided $\gs \geq 2$.
    
    The $\phi = \phi(\frac{1}{2}(|D_xu|^2 + |y|^2))$ terms give 
    \begin{align*}
        \phi_A =&\ \phi'(u_i u_{iA} + y_\ga \gd_{\ga A}),\\
        \phi_{AA} =&\ \phi''(u_i u_{iA} + y_\ga \gd_{\ga A})(u_j u_{jA} + y_\gb \gd_{\gb A})\\
        &\ + \phi'(u_{iA}^2 + u_iu_{iAA} + \gd_{\ga A}).
    \end{align*}
    Therefore, 
    \begin{align*}
        I_2 =& g^{AA}\left[\left(\frac{\phi''}{\phi} - (1 + \frac{2}{\gs})\left(\frac{\phi'}{\phi}\right)^2\right)(u_i u_{iA} + y_\ga \gd_{\ga A})(u_j u_{jA} + y_\gb \gd_{\gb A})\right]\\
        &\ + g^{AA}\frac{\phi'}{\phi}(u_{iA}^2 + u_iu_{iAA} + \gd_{\ga A}) \\
        =& g^{AA}\left[\left(\frac{\phi''}{\phi} - (1 + \frac{2}{\gs})\left(\frac{\phi'}{\phi}\right)^2\right)(u_i u_{iA} + y_\ga \gd_{\ga A})(u_j u_{jA} + y_\gb \gd_{\gb A})\right]\\
        &\ + \frac{\phi'}{\phi}\left(u_{ii} - u^{\ga\ga}u_{i\ga}^2 - u^{\ga\ga} + u_i f_i\right) \\
        \geq &\ \frac{\phi'}{\phi}(\tr W + u_i f_i)
    \end{align*}   
    Where the last inequality follows from choosing $\phi(t) = (1-t/M)^{-s}$ where $M = \sup_{S_r(x_0,y_0)} (|D_xu|^2 + |y|^2)$ and $s = \frac{\gs}{2}$ is to be determined. Moreover, notice that 
    \begin{align*}
        1\leq &\ \phi = (1-t/M)^{-s}\leq 2^{\frac{\gs}{2}},\\
        \frac{\gs}{2M} \leq &\ \frac{\phi'}{\phi} = \frac{s}{M}(1-\frac{t}{M})^{-1} \leq \frac{\gs}{M}.
    \end{align*}
    So that 
    $$I_2 \geq \frac{\gs}{2M}\gl - \frac{\gs}{M}\|D_xu\|_{C^0(S_r)}\|D_xf\|_{C^0(S_r)}.$$
    
    This leaves us with
    \begin{align*}
        0\geq &\  \left(\frac{\gs}{2M}  - \|D_y^2f\|_{C^0}\right)\gl\\
        &\ - \left(\frac{\gs (n + \|y\|_{C^0}\|D_yf\|_{C^0})}{\rho} + \frac{\gs}{M}\|D_xu\|_{C^0}\|D_xf\|_{C^0} + 2\|D_{xy}^2f\|_{C^0}\right)\\
        &\ - \left(\frac{\gs (i_v d \hatu(x,p))^2}{\rho^2} + \|D_x^2f\|_{C^0}\right)\gl^{-1}.
    \end{align*}
    
    Furthermore, applying (\ref{eq:xp-differential}) and the Cauchy-Schwarz inequality, we obtain
    $$i_v d\hatu(x,p) = (d\hatu(x,p))(v) \leq |v|_{\gd} |d\hatu(x,p)|_{\gd} = \sqrt{(u_i)^2 + (y_\ga)^2} \leq \sqrt{M}.$$
    So that the expression becomes
    \begin{align*}
        0\geq &\  \left(\frac{\gs}{2M}  - \|D_y^2f\|_{C^0}\right)\gl\\
        &\ - \left(\frac{\gs (n + \|y\|_{C^0}\|D_yf\|_{C^0})}{\rho} + \frac{\gs}{M}\|D_xu\|_{C^0}\|D_xf\|_{C^0} + 2\|D_{xy}^2f\|_{C^0}\right)\\
        &\ - \left(\frac{\gs M}{\rho^2} + \|D_x^2f\|_{C^0}\right)\gl^{-1}.
    \end{align*}
    We want this expression to be in terms of the objective quantity, so we may multiply through by $\rho^{2\gs}\gl$ and use the defining property of a section to obtain
    \begin{align*}
        0\geq &\  \left(\frac{\gs}{2M}  - \|D_y^2f\|_{C^0}\right)(\rho^{\gs}\gl)^2\\
        &\ - \left(\frac{\gs (n + \|y\|_{C^0}\|D_yf\|_{C^0})}{\rho} + \frac{\gs}{M}\|D_xu\|_{C^0}\|D_xf\|_{C^0} + 2\|D_{xy}^2f\|_{C^0}\right)\rho^{2\gs}\gl\\
        &\ - \left(\frac{\gs M}{\rho^2} + \|D_x^2f\|_{C^0}\right)\rho^{2\gs}.
    \end{align*}
    
    Thus, by Young's inequality with parameter and $\gs \geq 2$, we have 
    \begin{align*}
        0\geq &\  \left(\frac{\gs}{4M}  - \|D_y^2f\|_{C^0}\right)(\rho^{\gs}\gl)^2\\
        &\ - 2\gs M (n + \|y\|_{C^0}\|D_yf\|_{C^0})^2\rho^{2(\gs - 1)} - 2\frac{M}{\gs}\left(\frac{\gs}{M}\|D_xu\|_{C^0}\|D_xf\|_{C^0} + 2\|D_{xy}^2f\|_{C^0}\right)^2\rho^{2\gs}\\
        &\ - \left(\frac{\gs M}{\rho^2} + \|D_x^2f\|_{C^0}\right)\rho^{2\gs}.
    \end{align*}
    
    We define $\gs$, the scale invariant exponent by
    \begin{equation*}
        \gs = 2 + 4M\|D_y^2f\|_{C^0(S_r)}.
    \end{equation*}
    This allows us to define two  constants, $C_1$ which is $-2$-homogeneous with respect to elliptic rescalings and $C_2$ which is $+2$-homogeneous
    \begin{align*}
        C_1 =&\ [2(n + \|y\|_{C^0(S_r)}\|D_yf\|_{C^0(S_r)})^2 + 1]M\gs,\\
        C_2 =&\ 2\frac{M}{\gs}\left(\frac{\gs}{M}\|D_xu\|_{C^0(S_r)}\|D_xf\|_{C^0(S_r)} + 2\|D_{xy}^2f\|_{C^0(S_r)}\right)^2 + \|D_x^2f\|_{C^0(S_r)}.
    \end{align*}
    This yields a manifestly scale invariant estimate at a maximum of $Q$
    \begin{equation*}
        \gl \leq \sqrt{2M \left(C_1\rho^{-2} + C_2\right)}.
    \end{equation*}

    We now use two facts. First, that $1\leq \phi \leq 2^{\frac{\gs}{2}}$ and second, $\hatuy \geq 0$ (\ref{eq:hatu-pos}), so that 
    \begin{equation}\label{eq:final-est}
        \sup_{S_r(0)}\rho^{\gs}\gl \leq \sup_{S_r(0)} e^Q \leq 2^{\frac{\gs}{2}}\sqrt{2M \left(C_1 + C_2r^4\right)}r^{2(\gs - 1)}.
    \end{equation}

    \begin{remark}
        The exponent $\gs$ depends on first derivatives of the solution, but is itself scale-invariant. For $\mu >0$, $u_{\mu} = \mu^{-2}u(\mu x,\mu y)$
        solves the PDE on the domain with $(x,y) \mapsto (\mu x,\mu y)$ so that 
        $$\sup_{S^{u_{\mu}}_r(0)}(|D_xu_\mu|^2(x,y) + |y|^2) = \sup_{S^{u_{\mu}}_r(0)}(\mu^{-2}|D_xu|^2(\mu x,\mu y) + |y|^2).$$
        Moreover
        $$S_r^{u_\mu}(0) = \{\hatu_{\mu} < r^2\} = \{(\mu x,\mu y) | \hatu(\mu x,\mu y) < \mu^2 r^2\} = \mu^{-1}(S_{\mu r}^u(0))$$
        so that 
        $$M_\mu = \sup_{S^{u_{\mu}}_r(0)}(\mu^{-2}|D_xu|^2(\mu x,\mu y) + |y|^2) = \mu^{-2}\sup_{\mu^{-1}(S^{u}_{\mu r}(0))}(|D_xu|^2(\mu x,\mu y) + |\mu y|^2) = \mu^{-2} M.$$
        Additionally $f_\mu = f(\mu x,\mu y)$ and
        $$\|D^2_yf_\mu\|_{C^0(S_r^{u_\mu}(0))} = \|\mu^2 D^2_yf(\mu x,\mu y)\|_{C^0(\mu^{-1}(S^u_{\mu r}(0)))} = \mu^2 \|D^2_yf\|_{C^0(S_{\mu r}^u(0))}$$
        therefore, $\sigma_\mu = \sigma$.

        Also, notice that $\rho$ scales $-2$-homogeneously
        $$\rho_{S_r^{u_{\mu}}(0)}(x,y) = r^2 - \hatu_{\mu}(x, y) = \mu^{-2}[(\mu r)^2 - \hatu (\mu x,\mu y)] = \mu^{-2}\rho_{\mu^{-1}(S_{\mu r}^u(0))}(x,y).$$
    \end{remark}
    
    \subsection{Estimate (\ref{eq:third-fourth})} \label{subsec:third-fourth}
    We turn our attention now to the terms 
    $$I_3 = \frac{L(v^TWv)}{v^TWv} - |d\log v^TWv|_g^2 - \frac{2}{\gs}|P\Psi^* d\log v^TWv|_{W}^2.$$
    Using the calculations from the appendices, we find that 
    \begin{align*}
        I_3 =&\ \frac{|S(v,\cdot,\cdot)|_W^2 + D^2f(\Psi_*v,\Psi_*v)}{v^TWv} - \left|\Phi^* \Psi^*\frac{d v^TWv}{v^TWv}\right|_g^2 - \frac{2}{\gs}\left|P\Psi^* \frac{dv^TWv}{v^TWv}\right|_{W}^2\\
        =&\ \frac{|S(v,\cdot,\cdot)|_W^2 + D^2f(\Psi_*v,\Psi_*v)}{v^TWv} - \left|\frac{S(v,v,\cdot)}{v^TWv}\right|_W^2 - \frac{2}{\gs}\left|\frac{S(v,v,P\cdot)}{v^TWv}\right|_{W}^2
    \end{align*}

    But notice that $i_vS$ and $i_v^2S$ can be orthogonally decomposed along $v$
    \begin{align*}
       S(v,\cdot , \cdot) =&\ S(v,v,v)v^T\otimes v^T + v^T\otimes S(v,v,P\cdot) + S(v,P\cdot, v) \otimes v^T + S(v,P\cdot,P\cdot),\\
       S(v,v , \cdot) =&\ S(v,v,v) v^T + S(v,v,P\cdot).
    \end{align*}
    Expanding the norms arising in $I_3$ via the orthogonal decompositions, we find
    \begin{equation*}
        I_3 = \left[1-\frac{2}{\gs}\right]\frac{|S(v,v,P\cdot)|^2_W}{(v^TWv)^2} + \frac{|S(v,P\cdot,P\cdot)|_W^2}{v^TWv} + \frac{D^2f(\Psi_*v,\Psi_*v)}{v^TWv}.
    \end{equation*}
    From which we obtain that 
    $$I_3 \geq \frac{D^2f(\Psi_*v,\Psi_*v)}{v^TWv}$$
    whenever $2 \leq \gs$.

    Finally, notice that $\Psi_*$ and $W$ are related by
    $$\Psi_* v = \begin{pmatrix}
        v^+\\
        -\pi_-Wv
    \end{pmatrix}.$$
    Thus, the $D^2f$ terms expand as 
    $$D^2f(\Psi_*v,\Psi_*v) = D^2f(v^+,v^+) - 2 \gl D^2f(v^+,v^-) + \gl^2 D^2f(v^-,v^-).$$

    Therefore, the $D^2f$ terms are bounded below by 
    $$I_3 \geq -\frac{\|D_x^2f\|_{C^0}}{\gl} - 2\|D_{xy}^2f\|_{C^0} - \|D_y^2f\|_{C^0}\gl.$$

 \section{Liouville Theorem (Proof of Corollary \ref{cor:conditional-JCP})}

     \begin{lemma}\label{lem:grad}
        Suppose that $u\in \mathcal{A}(\Omega)$ and $\Omega$ is convex, then for every $(x,y)\in \Omega$,
		\begin{align*}
			|D_xu|(x,y) \leq& \frac{\sup_{\Omega}u - u(x,y)}{\dist((x,y), \del \Omega)}\\
            |D_yu|(x,y) \leq& \frac{u(x,y) - \inf_{\Omega}u}{\dist((x,y), \del \Omega)} 
		\end{align*}
        \begin{proof}
            The function $v(x) = u(x,y)$ is convex for each $y$, if $(x,y),(x',y')\in \Omega$ (also convex), then if $y=y'$ we have
            $$v(x) \geq v(x') + (x-x')^T D_xv(x').$$
            Taking $x = x' + s\frac{D_xv}{|D_xv|}(x')$ then implies that 
            $$v(x) - v(x') \geq s |D_xv|(x'), \quad \forall 0\leq s < \dist((x',y'),\del \Omega).$$
            Notice that this interval is suboptimal, but it will lend itself to a symmetric gradient estimate. Expanding in terms of $u$ yields the result.

            Similarly, $w(y) = u(x,y)$ is concave for each $x$. If $(x,y),(x',y')\in \Omega$, then when $x=x'$ we have
            $$w(y) \leq w(y') + (y-y')^TD_yw(y').$$
            Taking $y = y' - s \frac{D_yw}{|D_yw|}(y')$ then implies that 
            $$w(y') - w(y) \geq s|D_yw|(y'), \forall 0\leq s < \dist((x',y'),\del \Omega).$$
        \end{proof}
    \end{lemma}

    \begin{proof}[Proof of Corollary \ref{cor:conditional-JCP}]
    By the quadratic growth hypothesis on $\hatu$, we have that 
    $$C^{-2}(|x|^2+|y|^2) \leq \hatu(x,y) \leq R^2,$$
    which implies that sections are contained in balls, i.e. $S_R(0) \subset B_{CR}(0)$. Conversely, if $(x,y) \in B_{\frac{R}{C}}(0)$, then 
    $$\hatu(x,y) \leq C^2 (|x|^2 + |y|^2) \leq R^2,$$
    showing that every section contains a ball, i.e. $B_{\frac{R}{C}}(0) \subset S_R(0)$. To summarize, the sections are all trapped by balls
    \begin{equation}\label{eq:trapping}
        B_{\frac{R}{C}}(0) \subset S_R(0) \subset B_{CR}(0).
    \end{equation}

    We now apply Lemma \ref{lem:grad} on the convex set $B_{2CR}$, the ball trapping (\ref{eq:trapping}), and take the supremum with respect to $S_{R}$ to find 
    $$\sup_{S_{R}}|D_xu|(x,y) \leq \frac{\sup_{B_{2CR}(0)}u - u(x,y)}{\dist(S_{R}, \del B_{2CR})}.$$
    By the ball-trapping inequality (\ref{eq:trapping}), we know that $S_{R} \subset B_{CR}$ and we may estimate the right-hand side using the quadratic growth condition on $u$ to obtain
    $$\frac{\sup_{B_{2CR}(0)}u - u(x,y)}{\dist(S_{R}, \del B_{2CR})} \leq \frac{5AC^2R^2}{\dist(B_{CR}, \del B_{2CR})}.$$
    However, 
    $$\dist(B_{CR}, \del B_{2CR}) = CR>0.$$
    This yields the gradient estimate
    \begin{equation}\label{eq:gradient-est}
        \sup_{S_{R}}|D_xu|(x,y) \leq C_3(A,C)R
    \end{equation}
    where $C_3$ is independent of $R$. Therefore, by (\ref{eq:gradient-est}) and (\ref{eq:trapping}), we find that 
    $$M_{R} \leq C_4(A,C)R^2.$$
    
    Applying Corollary \ref{cor:fconst} to the section $S_R(0)$, we have 
    $$\sup_{S_R(0)}(\rho^2_R |W(D^2u)|)\leq C_5(A,C,n) R^4.$$
    But by restricting to $S_{\frac{R}{2}}(0)$, we see that $\rho^2 > \frac{9}{16}R^4$, which yields
    $$\sup_{S_{\frac{R}{2}}}|W| \leq C_6(A,C,n).$$
    By ball trapping, we find 
    $$\sup_{B_{\frac{R}{2C}}}|W| \leq C_6(A,C,n).$$
    Since $R>0$ was arbitrary and $C_6$ is independent of $R$, we find that 
    \begin{equation}\label{eq:UE}
        \sup_{\R^n}|W| \leq C_6(A,C,n).
    \end{equation}
    Therefore, since $\det W = c_0$, rearranging and (\ref{eq:UE}) imply uniform ellipticity, i.e.
    $$\det W = c_0 \implies \inf_{\R^n} \gl_{min}(W) \geq \frac{c_0}{\sup_{\R^n}\gl_{max}^{n-1}} \geq \frac{c_0}{C_6^{n-1}}>0.$$
    We set 
    $$C_7(A,C,n,c_0) = \max \left\{C_6(A,C,n), \frac{C_6^{n-1}(A,C,n)}{c_0}\right\}$$

    By (\ref{def:Wmatrix}), this implies that the solution is uniformly elliptic. To see why, notice that if $C_7^{-1}I \leq W\leq C_7 I$, then
    $$C_7^{-1} I_{\R^{n_-}}\leq (W_{\ga \gb}) = - (D_y^2u)^{-1} \leq C_7 I_{\R^{n_-}}.$$
    From which we obtain,  
    \begin{equation}\label{eq:D2yubnd}
    C_7^{-1}I_{\R^{n_-}} \leq -D_y^2u \leq C_7I_{\R^{n_-}}.
    \end{equation}
    Furthermore, we have that 
    $$C_7^{-1}I_{\R^{n_+}} \leq (W_{ij}) = D_x^2u - (D_{xy}^2u)^T(D_y^2u)^{-1}(D_{xy}^2u) \leq C_7I_{\R^{n_+}}.$$
    However, by positivity of $-D_y^2u$ and $D_x^2u$, we know that
    \begin{equation}\label{eq:D2xubnd}
    0 < D_x^2u \leq (W_{ij}) \leq C_7I_{\R^{n_+}}.
    \end{equation}
    Then by combining (\ref{eq:D2yubnd}) with the equation, we find that 
    $$c_0C_7^{-n_-} \leq \det D^2_xu \leq c_0C_7^{n_-}.$$
    This along with the upper bound (\ref{eq:D2xubnd}) implies the eigenvalue lower bound
    $$\gl_{min}(D^2_xu)\geq \frac{c_0C_7^{-n_-}}{(\gl_{max}(D_x^2u))^{n_+-1}} \geq \frac{c_0}{C_7^{n-1}}.$$
    Therefore the metric $g$ satisfies 
    $$C_8^{-1}I \leq g \leq C_8I, \quad C_8(A,C,n,c_0) = \max\left\{C_7(A,C,n,c_0), \frac{C_7^{n-1}}{c_0}\right\}.$$
    Crucially, $C_8$ is independent of $R$ and (\ref{eq:TMA}) is uniformly elliptic on $\R^n$. At this point, we may invoke the Streets-Warren Liouville Theorem \cite[][Corollary 2.4]{streets_evans-krylov_2016} and conclude the proof.
    \end{proof}
    
    \section*{Acknowledgements}
    The author would like to thank Hao Fang for many helpful discussions regarding this project. The author would also like to thank Bryan Dimler and Aranya Sen for their very helpful feedback on earlier versions of this manuscript.

    \section*{Declaration of Generative AI Use}
    The author has used Anthropic's Claude Opus 5 and ChatGPT 5.6 Sol for assistance in closing gaps in the proof of Lemma \ref{lem:xp-cnvx}, ChatGPT 5.6 Sol for generating the counter-example to show that the bound on $|u|$ does not imply the growth hypothesis in Corollary \ref{cor:conditional-JCP}, and both models for proofreading and editing. The author takes full responsibility for the contents of this paper.

    \section*{Funding}
    This work has been supported in part by the NSF via Research Training Grant DMS-2038103.

    \bibliographystyle{tfnlm}
    \bibliography{mybib2}
    
 	\appendix{\section{$W$-Calculations}}
 	We begin by remarking that there are well-known formulae \cite[cf.][]{streets_evans-krylov_2016} for $W$ and $W^{-1}$
 	\begin{align} \label{def:Wmatrix2}
 		&W_{ij} = u_{ij} - u_{i\gg}u^{\gg\gd}u_{j\gd}, &\ &W^{ij} = u^{ij}, \nonumber\\
 		&W_{i\ga} = u_{i\gg}u^{\gg\ga}, &\ &W^{i\ga} = u^{i p}u_{p \ga},\\
 		&W_{\ga\gb} = - u^{\ga\gb}, &\ &W^{\ga\gb} = -u_{\ga\gb} + u_{\ga k}u^{kl}u_{l\gb}.\nonumber
 	\end{align}
 	In these formulas, it is important to emphasize that upper and lower indices on $u$ and $W$ are being used differently. Namely, upper indices on $u$ refer to the inverses of the partial Hessians $D_{x}^2u$ and $D_y^2u$, whereas upper indices on $W$ refer to components of the full inverse of $W$.
 	
 	\begin{lemma}\label{lem:LW-from-u}
 		Suppose that $u\in C^4$ is a solution of (\ref{eq:TMA}), then the matrix $W$ defined in (\ref{def:Wmatrix2}) is acted on by the linearized operator $L$ at a point $(x,y)$ where $g$ has been diagonalized in the following way.
 		\begin{align*}
 			LW_{ij} = &\ \frac{1}{u_{kk}u_{ll}}\left(u_{jkl} - \frac{u_{j\gb}u_{kl\gb}}{u_{\gb\gb}}\right)\left(u_{ikl} - \frac{u_{i\ga}u_{kl\ga}}{u_{\ga\ga}}\right) - \frac{2}{u_{kk}u_{\gg\gg}} \left(u_{jk\gg} - \frac{u_{j\gb}u_{k\gb\gg}}{u_{\gb\gb}}\right)\left(u_{ik\gg} - \frac{u_{i\ga}u_{k\ga\gg}}{u_{\ga\ga}}\right)\\
 			&\ + \frac{1}{u_{\gg\gg}u_{\gd\gd}}\left(u_{j\gg\gd} - \frac{u_{j\gb}u_{\gb\gg\gd}}{u_{\gb\gb}}\right)\left(u_{i\gg\gd} - \frac{u_{i\ga}u_{\ga\gg\gd}}{u_{\ga\ga}}\right) + \left(f_{ij} - \frac{u_{j\gg}f_{i\gg}}{u_{\gg\gg}} - \frac{u_{i\gg}f_{j\gg}}{u_{\gg\gg}} + \frac{u_{i\gg}f_{\gg\gd}u_{j\gd}}{u_{\gg\gg}u_{\gd\gd}}\right)\\
 			LW_{i\ga} =&\ \frac{1}{u_{\ga\ga}}\frac{u_{kl\ga}}{u_{kk}u_{ll}}\left( u_{ikl} - \frac{u_{i\gb}u_{kl\gb}}{u_{\gb\gb}}\right) - \frac{2}{u_{\ga\ga}}\frac{u_{k\ga\gg}}{u_{kk}u_{\gg\gg}}\left( u_{ik\gg}  - \frac{u_{i\gb}u_{k\gb\gg}}{u_{\gb\gb}}\right)\\
 			&\ + \frac{1}{u_{\ga\ga}}\frac{u_{\ga\gg\gd}}{u_{\gg\gg}u_{\gd\gd}}\left(u_{i\gg\gd}  - \frac{u_{i\gb}u_{\gb\gg\gd}}{u_{\gb\gb}}\right) + \left(\frac{f_{i\ga}}{u_{\ga\ga}} - \frac{u_{i\gg}f_{\gg\ga}}{u_{\gg\gg}u_{\ga\ga}}\right)\\
 			LW_{\ga\gb}= &\ \frac{1}{u_{\ga\ga}u_{\gb\gb}}\left(\frac{u_{kl\ga}u_{kl\gb}}{u_{kk}u_{ll}} - 2 \frac{u_{k\ga\gg}u_{k\gb\gg}}{u_{kk}u_{\gg\gg}} + \frac{u_{\ga\gg\gd}u_{\gb\gg\gd}}{u_{\gg\gg}u_{\gd\gd}}\right) + \left(\frac{f_{\ga\gb}}{u_{\ga\ga}u_{\gb\gb}}\right)
 		\end{align*}
 		\begin{proof}
 			\begin{align*}
 				W_{ij,A} =&\ u_{ijA} - u_{i\gg A}u^{\gg\gd}u_{j\gd} - u_{i\gg}u^{\gg\gd}u_{j\gd A} + u_{i\gg}u^{\gg\mu}u_{\mu\nu A}u^{\gd \nu}u_{j\gd}\\
 				W_{i\ga, A} =&\ u_{i\gg A}u^{\gg \ga} - u_{i\gg}u^{\gg \mu}u_{\mu \nu A}u^{\nu \ga}\\
 				W_{\ga\gb, A}=&\ u^{\ga \gg}u_{\gg \gd A}u^{\gd \gb}
 			\end{align*}
 			
 			The second derivatives give 
 			\begin{align*}
 				W_{ij,AA} =&\ u_{ijAA} - u_{i\gg AA}u^{\gg\gd}u_{j\gd} - u_{i\gg}u^{\gg\gd}u_{j\gd AA} + u_{i\gg}u^{\gg\mu}u_{\mu\nu AA}u^{\gd \nu}u_{j\gd}\\
 				&\ - 2u_{i\gg A}u^{\gg\gd}u_{j\gd A} + 2u_{i\gg A}u^{\gg\mu}u_{\mu\nu A}u^{\nu\gd}u_{j\gd} + 2u_{i\gg}u^{\gg\mu}u_{\mu\nu A}u^{\nu\gd}u_{j\gd A}\\
 				&\ -2 u_{i\gg}u^{\gg\rho}u_{\rho \gl A} u^{\gl\mu}u_{\mu\nu A}u^{\gd \nu}u_{j\gd}\\
 				W_{i\ga, AA} =&\ u_{i\gg AA}u^{\gg \ga}- u_{i\gg}u^{\gg \mu}u_{\mu \nu AA}u^{\nu \ga} - 2u_{i\gg A}u^{\gg\mu}u_{\mu\nu A}u^{\nu \ga} + 2u_{i\gg}u^{\gg \gd}u_{\gd\ep A}u^{\ep \mu}u_{\mu \nu A}u^{\nu \ga}\\
 				W_{\ga\gb,AA}=&\ u^{\ga \gg}u_{\gg \gd A A}u^{\gd \gb} - 2u^{\ga\mu}u_{\mu\nu A}u^{\nu \gg}u_{\gg \gd A}u^{\gd \gb}
 			\end{align*}
 			
 			At a diagonal point
 			\begin{align*}
 				W_{ij,A} =&\ u_{ijA} - \frac{u_{i\gg A}u_{j\gg}}{u_{\gg\gg}} - \frac{u_{i\gg}u_{j\gg A}}{u_{\gg\gg}} + \frac{u_{i\gg}u_{\gg\gd A}u_{j\gd}}{u_{\gg\gg}u_{\gd\gd}}\\
 				W_{i\ga, A} =&\ \frac{u_{i\ga A}}{u_{\ga \ga}} - \frac{u_{i\gg}u_{\gg \ga A}}{u_{\ga\ga}u_{\gg\gg}}\\
 				W_{\ga\gb, A}=&\ \frac{u_{\ga \gb A}}{u_{\ga\ga}u_{\gb\gb}}
 			\end{align*}
 			
 			\begin{align*}
 				W_{ij,AA} =&\ u_{ijAA} - \frac{u_{i\gg AA}u_{j\gg}}{u_{\gg\gg}} - \frac{u_{i\gg}u_{j\gg AA}}{u_{\gg\gg}} + \frac{u_{i\gg}u_{\gg\gd AA}u_{j\gd}}{u_{\gg\gg}u_{\gd\gd}}\\
 				&\ - 2\frac{u_{i\gg A}u_{j\gg A}}{u_{\gg\gg}} + 2\frac{u_{i\gg A}u_{\gg\gd A}u_{j\gd}}{u_{\gg\gg}u_{\gd\gd}} + 2\frac{u_{i\gg}u_{\gg\gd A}u_{j\gd A}}{u_{\gg\gg}u_{\gd\gd}} -2 \frac{u_{i\gg}u_{\gg \mu A}u_{\mu\gd A}u_{j\gd}}{u_{\gg\gg}u_{\gd\gd}u_{\mu\mu}}\\
 				W_{i\ga, AA} =&\ \frac{u_{i\ga AA}}{u_{\ga\ga}} - \frac{u_{i\gg}u_{\gg \ga AA}}{u_{\gg\gg}u_{\ga\ga}} - 2\frac{u_{i\gg A}u_{\gg\ga A}}{u_{\gg\gg}u_{\ga\ga}} + 2\frac{u_{i\gg}u_{\gg\gd A}u_{\gd \ga A}}{u_{\gg\gg}u_{\gd\gd}u_{\ga\ga}}\\
 				W_{\ga\gb,AA}=&\ \frac{u_{\ga \gb A A}}{u_{\ga\ga}u_{\gb\gb}} - 2\frac{u_{\ga\gg A}u_{\gg \gb A}}{u_{\ga\ga}u_{\gb\gb}u_{\gg\gg}}
 			\end{align*}
 			
 			Tracing by $g^{-1}$ gives 
 			\begin{align*}
 				LW_{ij} =&\ \left(Lu_{ij} - 2\frac{u_{i\gg A}g^{AA}u_{j\gg A}}{u_{\gg\gg}}\right) - \frac{u_{j\gg}}{u_{\gg\gg}}\left(Lu_{i\gg} - 2\frac{u_{i\gd A}g^{AA}u_{\gg\gd A}}{u_{\gd\gd}}\right)\\
 				&\ - \frac{u_{i\gg}}{u_{\gg\gg}}\left(Lu_{j\gg} - 2\frac{u_{\gg\gd A}g^{AA}u_{j\gd A}}{u_{\gd\gd}}\right) + \frac{u_{i\gg}u_{j\gd}}{u_{\gg\gg}u_{\gd\gd}}\left(Lu_{\gg\gd} -2 \frac{u_{\gg \mu A}g^{AA}u_{\mu\gd A}}{u_{\mu\mu}}\right)\\
 				LW_{i\ga} =&\ \frac{1}{u_{\ga\ga}}\left(Lu_{i\ga} - 2\frac{u_{i\gg A}g^{AA}u_{\gg\ga A}}{u_{\gg\gg}}\right) - \frac{u_{i\gg}}{u_{\gg\gg}u_{\ga\ga}}\left(Lu_{\gg \ga} - 2\frac{u_{\gg\gd A}g^{AA}u_{\gd \ga A}}{u_{\gd\gd}}\right)\\
 				LW_{\ga\gb}=&\ \frac{1}{u_{\ga\ga}u_{\gb\gb}}\left(Lu_{\ga \gb}  - 2\frac{u_{\ga\gg A}g^{AA}u_{\gg \gb A}}{u_{\gg\gg}}\right)
 			\end{align*}
 			From which, the result follows by (\ref{eq:L-of}).
 		\end{proof}
 	\end{lemma}
 	
 	The formula for $LW$ contained in Lemma \ref{lem:LW-from-u} will be much more convenient to work with if it is written in terms of derivatives of $W$ instead. Therefore, we will prove the following theorem, motivated by analogy to the Monge-Amp\`ere equation.
 	\begin{proposition} \label{thm:LW-from-W}
 		Suppose that $u\in C^4$ is a solution of (\ref{eq:TMA}), then the matrix $W$ defined in (\ref{def:Wmatrix2}) is acted on by the linearized operator $L$ at a point $(x,y)$ where $g$ has been diagonalized according to the formula 
 		\begin{equation}\label{eq:LW-from-W}
 			LW_{AB} = W^{PQ}g^{RS}W_{AP,R}W_{BQ,S} + (\Psi_*)_A^Pf_{PQ}(\Psi_*)^Q_B.
 		\end{equation}
 		
 		\begin{proof}
 			Fix a point $(x,y)$ and diagonalize $g(x,y)$. One can then compute
 			\begin{align*}
 				W^{PQ}g^{RS}W_{AP,R}W_{BQ,S} =&\ W^{ij}\left(\frac{W_{Ai,k}W_{Bj,k}}{u_{kk}}  - \frac{W_{Ai,\mu}W_{Bj,\mu}}{u_{\mu\mu}}\right) + W^{\nu \rho}\left(\frac{W_{A\nu,k}W_{B\rho,k}}{u_{kk}} - \frac{W_{A\nu,\mu}W_{B\rho,\mu}}{u_{\mu\mu}}\right)\\
 				&\ + W^{i\nu}\left(\frac{(W_{Ai,k}W_{B\nu,k} + W_{A\nu,k}W_{B i,k})}{u_{kk}} - \frac{(W_{Ai,\mu}W_{B\nu,\mu} + W_{A\nu,\mu}W_{Bi,\mu})}{u_{\mu\mu}}\right)\\
 				=&\ u^{ii}\left(\frac{W_{Ai,k}W_{Bi,k}}{u_{kk}}  - \frac{W_{Ai,\mu}W_{Bi,\mu}}{u_{\mu\mu}}\right)\\
 				&\ + \left(-u_{\nu\rho} + \frac{u_{\nu p}u_{p\rho}}{u_{pp}}\right)\left(\frac{W_{A\nu,k}W_{B\rho,k}}{u_{kk}} - \frac{W_{A\nu,\mu}W_{B\rho,\mu}}{u_{\mu\mu}}\right)\\
 				&\ + \left(u^{i p}u_{p \nu}\right)\left(\frac{(W_{Ai,k}W_{B\nu,k} + W_{A\nu,k}W_{B i,k})}{u_{kk}} - \frac{(W_{Ai,\mu}W_{B\nu,\mu} + W_{A\nu,\mu}W_{Bi,\mu})}{u_{\mu\mu}}\right)
 			\end{align*}
 			
 			We will prove Theorem \ref{thm:LW-from-W} arguing type-by-type in the next 3 cases.
 			\begin{description} 	
 				\item[Case 1 $(-/-)$]
 					In the setting in which $A = \ga$ and $B = \gb$, we can expand the right-hand side of (\ref{eq:LW-from-W}) as
 					\begin{align*}
 						W^{PQ}g^{RS}W_{\ga P,R}W_{\gb Q,S} =&\ u^{ii}\left(\frac{W_{\ga i,k}W_{\gb i,k}}{u_{kk}}  - \frac{W_{\ga i,\mu}W_{\gb i,\mu}}{u_{\mu\mu}}\right)\\
 						&\ + \left(-u_{\nu\rho} + \frac{u_{\nu p}u_{p\rho}}{u_{pp}}\right)\left(\frac{W_{\ga \nu,k}W_{\gb \rho,k}}{u_{kk}} - \frac{W_{\ga\nu,\mu}W_{\gb\rho,\mu}}{u_{\mu\mu}}\right)\\
 						&\ + \left(u^{i p}u_{p \nu}\right)\left(\frac{(W_{\ga i,k}W_{\gb\nu,k} + W_{\ga\nu,k}W_{\gb i,k})}{u_{kk}} - \frac{(W_{\ga i,\mu}W_{\gb\nu,\mu} + W_{\ga\nu,\mu}W_{\gb i,\mu})}{u_{\mu\mu}}\right).
 					\end{align*}
 			
 					It will then be easiest to go line by line. Taking the first line, we have
 					\begin{align*}
 						u^{ii}\left(\frac{W_{\ga i,k}W_{\gb i,k}}{u_{kk}}  - \frac{W_{\ga i,\mu}W_{\gb i,\mu}}{u_{\mu\mu}}\right) =&\ \frac{1}{u_{ii}u_{kk}}\left(\frac{u_{ik\ga}}{u_{\ga \ga}} - \frac{u_{i\gg}u_{k\ga\gg}}{u_{\ga\ga}u_{\gg\gg}}\right)\left(\frac{u_{ik\gb}}{u_{\gb \gb}} - \frac{u_{i\gd}u_{k\gb\gd}}{u_{\gb\gb}u_{\gd\gd}}\right)\\
 						&\  - \frac{1}{u_{ii}u_{\mu\mu}}\left(\frac{u_{i\ga \mu}}{u_{\ga \ga}} - \frac{u_{i\gg}u_{\ga\gg\mu}}{u_{\ga\ga}u_{\gg\gg}}\right)\left(\frac{u_{i\gb \mu}}{u_{\gb \gb}} - \frac{u_{i\gd}u_{\gb\gd\mu}}{u_{\gb\gb}u_{\gd\gd}}\right).
 					\end{align*}
 					The second line is then
 					\begin{align*}
 						\left(-u_{\nu\rho} + \frac{u_{\nu p}u_{p\rho}}{u_{pp}}\right) &\ \left(\frac{W_{\ga \nu,k}W_{\gb \rho,k}}{u_{kk}} - \frac{W_{\ga\nu,\mu}W_{\gb\rho,\mu}}{u_{\mu\mu}}\right)\\
 						= &\ \frac{1}{u_{kk}}\left(-u_{\nu\rho} + \frac{u_{\nu p}u_{p\rho}}{u_{pp}}\right)\left(\frac{u_{k\ga \nu}}{u_{\ga\ga}u_{\nu\nu}}\right)\left(\frac{u_{k\gb\rho}}{u_{\gb\gb}u_{\rho\rho}}\right) \\
 						&\ - \frac{1}{u_{\mu\mu}}\left(-u_{\nu\rho} + \frac{u_{\nu p}u_{p\rho}}{u_{pp}}\right)\left(\frac{u_{\ga \nu \mu}}{u_{\ga\ga}u_{\nu\nu}}\right)\left(\frac{u_{\gb\rho \mu}}{u_{\gb\gb}u_{\rho\rho}}\right).
 					\end{align*}
 					The third line is then
 					\begin{align*}
 						\left(u^{i p}u_{p \nu}\right)& \left(\frac{(W_{\ga i,k}W_{\gb\nu,k} + W_{\ga\nu,k}W_{\gb i,k})}{u_{kk}} - \frac{(W_{\ga i,\mu}W_{\gb\nu,\mu} + W_{\ga\nu,\mu}W_{\gb i,\mu})}{u_{\mu\mu}}\right)\\
 						=&\  \frac{u_{i \nu}}{u_{ii}u_{kk}}\left[\left(\frac{u_{ik\ga}}{u_{\ga \ga}} - \frac{u_{i\gg}u_{k\ga\gg}}{u_{\ga\ga}u_{\gg\gg}}\right)\left(\frac{u_{k\gb\nu}}{u_{\gb\gb}u_{\nu\nu}}\right) + \left( \frac{u_{k\ga \nu}}{u_{\ga\ga}u_{\nu\nu}}\right)\left( \frac{u_{ik\gb}}{u_{\gb \gb}} - \frac{u_{i\gd}u_{k\gb\gd}}{u_{\gb\gb}u_{\gd\gd}}\right)\right]\\
 						&\ - \frac{u_{i \nu}}{u_{ii}u_{\mu\mu}} \left[\left(\frac{u_{i\ga\mu}}{u_{\ga \ga}} - \frac{u_{i\gg}u_{\ga\gg\mu}}{u_{\ga\ga}u_{\gg\gg}}\right)\left(\frac{u_{\gb\nu\mu}}{u_{\gb\gb}u_{\nu\nu}}\right) + \left( \frac{u_{\ga \nu\mu}}{u_{\ga\ga}u_{\nu\nu}}\right)\left( \frac{u_{i\gb\mu}}{u_{\gb \gb}} - \frac{u_{i\gd}u_{\gb\gd\mu}}{u_{\gb\gb}u_{\gd\gd}}\right)\right].
 					\end{align*}
 			
 					Combining, simplifying, and comparing to Lemma \ref{lem:LW-from-u} then yields 
 					$$W^{PQ}g^{RS}W_{\ga P,R}W_{\gb Q,S} + (\Psi_*)_\ga^Pf_{PQ}(\Psi_*)^Q_\gb = LW_{\ga\gb}.$$
 			
 				\item[Case 2 $(+/)$]
 					Proceeding to $A=j$ and $B=\gb$, we can expand the right-hand side of (\ref{eq:LW-from-W}) as
 					\begin{align*}
 						W^{PQ}g^{RS}W_{jP,R}W_{\gb Q,S} =&\ u^{ii}\left(\frac{W_{ij,k}W_{i\gb,k}}{u_{kk}}  - \frac{W_{ij,\mu}W_{i\gb,\mu}}{u_{\mu\mu}}\right)\\
 						&\ + \left(-u_{\nu\rho} + \frac{u_{\nu p}u_{p\rho}}{u_{pp}}\right)\left(\frac{W_{j\nu,k}W_{\gb\rho,k}}{u_{kk}} - \frac{W_{j\nu,\mu}W_{\gb\rho,\mu}}{u_{\mu\mu}}\right)\\
 						&\ + \left(u^{i p}u_{p \nu}\right)\left(\frac{(W_{ij,k}W_{\gb\nu,k} + W_{j\nu,k}W_{i\gb,k})}{u_{kk}} - \frac{(W_{ij,\mu}W_{\gb\nu,\mu} + W_{j\nu,\mu}W_{i\gb,\mu})}{u_{\mu\mu}}\right).
 					\end{align*}
 			
 					Taking the first line, we have 
 					\begin{align*}
 						u^{ii}&\ \left(\frac{W_{ij,k}W_{i\gb,k}}{u_{kk}}  - \frac{W_{ij,\mu}W_{i\gb,\mu}}{u_{\mu\mu}}\right)\\
 						=&\ \frac{1}{u_{ii}u_{kk}}\left(u_{ijk} - \frac{u_{ik\gg}u_{j\gg}}{u_{\gg\gg}} - \frac{u_{i\gg}u_{jk\gg}}{u_{\gg\gg}} + \frac{u_{i\gg}u_{k\gg\gd}u_{j\gd}}{u_{\gg\gg}u_{\gd\gd}}\right)\left(\frac{u_{ik\gb}}{u_{\gb \gb}} - \frac{u_{i\nu}u_{k\gb\nu}}{u_{\gb\gb}u_{\nu\nu}}\right)\\
 						&\ - \frac{1}{u_{ii}u_{\mu\mu}}\left(u_{ij\mu} - \frac{u_{i\gg \mu}u_{j\gg}}{u_{\gg\gg}} - \frac{u_{i\gg}u_{j\gg \mu}}{u_{\gg\gg}} + \frac{u_{i\gg}u_{\gg\gd \mu}u_{j\gd}}{u_{\gg\gg}u_{\gd\gd}}\right)\left(\frac{u_{i\gb \mu}}{u_{\gb \gb}} - \frac{u_{i\nu}u_{\gb\nu \mu}}{u_{\gb\gb}u_{\nu\nu}}\right).
 					\end{align*}
 					The second line is
 					\begin{align*}
 						\left(-u_{\nu\rho} + \frac{u_{\nu p}u_{p\rho}}{u_{pp}}\right)&\ \left(\frac{W_{j\nu,k}W_{\gb\rho,k}}{u_{kk}} - \frac{W_{j\nu,\mu}W_{\gb\rho,\mu}}{u_{\mu\mu}}\right)\\
 						=&\ \frac{1}{u_{kk}}\left(-u_{\nu\rho} + \frac{u_{\nu p}u_{p\rho}}{u_{pp}}\right)\left(\frac{u_{jk\nu}}{u_{\nu \nu}} - \frac{u_{j\gg}u_{k\gg \nu}}{u_{\nu\nu}u_{\gg\gg}}\right)\left(\frac{u_{k\gb \rho}}{u_{\gb\gb}u_{\rho\rho}}\right) \\
 						&\ - \left(\frac{1}{u_{\mu\mu}}\right)\left(-u_{\nu\rho} + \frac{u_{\nu p}u_{p\rho}}{u_{pp}}\right) \left(\frac{u_{j\nu \mu}}{u_{\nu \nu}} - \frac{u_{j\gg}u_{\gg \nu \mu}}{u_{\nu\nu}u_{\gg\gg}}\right)\left(\frac{u_{\gb \rho \mu}}{u_{\gb\gb}u_{\rho\rho}}\right).
 					\end{align*}
 					The third line is 
 					\begin{align*}
 						\left(u^{i p}u_{p \nu}\right)&\ \left(\frac{(W_{ij,k}W_{\gb\nu,k} + W_{j\nu,k}W_{i\gb,k})}{u_{kk}} - \frac{(W_{ij,\mu}W_{\gb\nu,\mu} + W_{j\nu,\mu}W_{i\gb,\mu})}{u_{\mu\mu}}\right)\\
 						=&\ \frac{u_{i \nu}}{u_{ii}u_{kk}} \left(u_{ijk} - \frac{u_{ik\gg}u_{j\gg}}{u_{\gg\gg}} - \frac{u_{i\gg}u_{jk\gg}}{u_{\gg\gg}} + \frac{u_{i\gg}u_{k\gg\gd}u_{j\gd}}{u_{\gg\gg}u_{\gd\gd}}\right) \left(\frac{u_{k\gb\nu}}{u_{\gb\gb}u_{\nu\nu}}\right)\\
 						&\ + \frac{u_{i \nu}}{u_{ii}u_{kk}}\left(\frac{u_{jk\nu}}{u_{\nu \nu}} - \frac{u_{j\gd}u_{k\gd \nu}}{u_{\nu\nu}u_{\gd\gd}}\right) \left(\frac{u_{ik\gb}}{u_{\gb \gb}} - \frac{u_{i\gg}u_{k\gg\gb}}{u_{\gb\gb}u_{\gg\gg}}\right)\\
 						&\ - \frac{u_{i \nu}}{u_{ii}u_{\mu\mu}} \left(u_{ij\mu} - \frac{u_{i\gg\mu}u_{j\gg}}{u_{\gg\gg}} - \frac{u_{i\gg}u_{j\gg \mu}}{u_{\gg\gg}} + \frac{u_{i\gg}u_{\gg\gd \mu}u_{j\gd}}{u_{\gg\gg}u_{\gd\gd}}\right) \left(\frac{u_{\gb\nu \mu}}{u_{\gb\gb}u_{\nu\nu}}\right)\\
 						&\ - \frac{u_{i \nu}}{u_{ii}u_{\mu\mu}}\left(\frac{u_{j\nu \mu}}{u_{\nu \nu}} - \frac{u_{j\gd}u_{\gd \nu\mu}}{u_{\nu\nu}u_{\gd\gd}}\right) \left(\frac{u_{i\gb\mu}}{u_{\gb \gb}} - \frac{u_{i\gg}u_{\gg\gb\mu}}{u_{\gb\gb}u_{\gg\gg}}\right).
 					\end{align*}
 			
 					Combining these, simplifying, and comparing to Lemma 	\ref{lem:LW-from-u}, we see that 
 					$$W^{PQ}g^{RS}W_{jP,R}W_{\gb Q,S} + (\Psi_*)_j^Pf_{PQ}(\Psi_*)^Q_\gb = LW_{j\gb}.$$
 			
 				\item[Case 3 $(+/+)$]
 					Now, moving on to the final component where $A = j$ and $B=l$, we expand the right-hand side of (\ref{eq:LW-from-W})
 					\begin{align*}
 						W^{PQ}g^{RS}W_{jP,R}W_{lQ,S} =&\ u^{ii}\left(\frac{W_{ij,k}W_{il,k}}{u_{kk}}  - \frac{W_{ij,\mu}W_{il,\mu}}{u_{\mu\mu}}\right)\\
 						&\ + \left(-u_{\nu\rho} + \frac{u_{\nu p}u_{p\rho}}{u_{pp}}\right)\left(\frac{W_{j\nu,k}W_{l\rho,k}}{u_{kk}} - \frac{W_{j\nu,\mu}W_{l\rho,\mu}}{u_{\mu\mu}}\right)\\
 						&\ + \frac{u_{i \nu}}{u_{ii}}\left(\frac{(W_{ij,k}W_{l\nu,k} + W_{j\nu,k}W_{il,k})}{u_{kk}} - \frac{(W_{ij,\mu}W_{l\nu,\mu} + W_{j\nu,\mu}W_{il,\mu})}{u_{\mu\mu}}\right).
 					\end{align*}
 			
 					The first row is
 					\begin{align*}
 						u^{ii}\left(\frac{W_{ij,k}W_{il,k}}{u_{kk}}  - \frac{W_{ij,\mu}W_{il,\mu}}{u_{\mu\mu}}\right) =&\ \frac{1}{u_{ii}u_{kk}}\left(u_{ijk} - \frac{u_{i\gg k}u_{j\gg}}{u_{\gg\gg}} - \frac{u_{i\gg}u_{j\gg k}}{u_{\gg\gg}} + \frac{u_{i\gg}u_{\gg\gd k}u_{j\gd}}{u_{\gg\gg}u_{\gd\gd}}\right)\\
 						&\ \quad \times \left(u_{ilk} - \frac{u_{i\nu k}u_{l\nu}}{u_{\nu\nu}} - \frac{u_{i\nu}u_{l\nu k}}{u_{\nu\nu}} + \frac{u_{i\nu}u_{\nu\ep k}u_{l\ep}}{u_{\nu\nu}u_{\ep\ep}}\right)\\
 						&\  - \frac{1}{u_{ii}u_{\mu\mu}}\left(u_{ij\mu} - \frac{u_{i\gg \mu}u_{j\gg}}{u_{\gg\gg}} - \frac{u_{i\gg}u_{j\gg \mu}}{u_{\gg\gg}} + \frac{u_{i\gg}u_{\gg\gd \mu}u_{j\gd}}{u_{\gg\gg}u_{\gd\gd}}\right)\\
 						&\ \quad \times \left(u_{il\mu} - \frac{u_{i\nu \mu}u_{l\nu}}{u_{\nu\nu}} - \frac{u_{i\nu}u_{l\nu \mu}}{u_{\nu\nu}} + \frac{u_{i\nu}u_{\nu\ep \mu}u_{l\ep}}{u_{\nu\nu}u_{\ep\ep}}\right).
 					\end{align*}
 					The second row is 
 					\begin{align*}
 						\left(-u_{\nu\rho} + \frac{u_{\nu p}u_{p\rho}}{u_{pp}}\right) &\ \left(\frac{W_{j\nu,k}W_{l\rho,k}}{u_{kk}} - \frac{W_{j\nu,\mu}W_{l\rho,\mu}}{u_{\mu\mu}}\right)\\
 						=&\ \frac{1}{u_{kk}} \left(-u_{\nu\rho} + \frac{u_{\nu p}u_{p\rho}}{u_{pp}}\right)\left(\frac{u_{j\nu k}}{u_{\nu \nu}} - \frac{u_{j\gg}u_{\gg \nu k}}{u_{\nu\nu}u_{\gg\gg}}\right)\left(\frac{u_{l\rho k}}{u_{\rho \rho}} - \frac{u_{l\gd}u_{\gd \rho k}}{u_{\rho\rho}u_{\gd\gd}}\right) \\
 						&\ - \frac{1}{u_{\mu\mu}} \left(-u_{\nu\rho} + \frac{u_{\nu p}u_{p\rho}}{u_{pp}}\right)\left(\frac{u_{j\nu \mu}}{u_{\nu \nu}} - \frac{u_{j\gg}u_{\gg \nu \mu}}{u_{\nu\nu}u_{\gg\gg}}\right)\left(\frac{u_{l\rho \mu}}{u_{\rho \rho}} - \frac{u_{l\gd}u_{\gd \rho \mu}}{u_{\rho\rho}u_{\gd\gd}}\right).
 					\end{align*}
 					The third line is 
 					\begin{align*}
 						\frac{u_{i \nu}}{u_{ii}}&\ \left(\frac{(W_{ij,k}W_{l\nu,k} + W_{j\nu,k}W_{il,k})}{u_{kk}} - \frac{(W_{ij,\mu}W_{l\nu,\mu} + W_{j\nu,\mu}W_{il,\mu})}{u_{\mu\mu}}\right)\\
 						=&\ \frac{u_{i \nu}}{u_{ii}u_{kk}}\left(u_{ijk} - \frac{u_{i\gg k}u_{j\gg}}{u_{\gg\gg}} - \frac{u_{i\gg}u_{j\gg k}}{u_{\gg\gg}} + \frac{u_{i\gg}u_{\gg\gd k}u_{j\gd}}{u_{\gg\gg}u_{\gd\gd}}\right)\left(\frac{u_{l\nu k}}{u_{\nu \nu}} - \frac{u_{l\gd}u_{\gd \nu k}}{u_{\nu\nu}u_{\gd\gd}}\right)\\
 						&\ + \frac{u_{i \nu}}{u_{ii}u_{kk}}\left(u_{ilk} - \frac{u_{i\gg k}u_{l\gg}}{u_{\gg\gg}} - \frac{u_{i\gg}u_{l\gg k}}{u_{\gg\gg}} + \frac{u_{i\gg}u_{\gg\gd k}u_{l\gd}}{u_{\gg\gg}u_{\gd\gd}}\right)\left(\frac{u_{j\nu k}}{u_{\nu \nu}} - \frac{u_{j\gd}u_{\gd \nu k}}{u_{\nu\nu}u_{\gd\gd}}\right)\\
 						&\ -\frac{u_{i \nu}}{u_{ii}u_{\mu\mu}}\left(u_{ij\mu} - \frac{u_{i\gg \mu}u_{j\gg}}{u_{\gg\gg}} - \frac{u_{i\gg}u_{j\gg \mu}}{u_{\gg\gg}} + \frac{u_{i\gg}u_{\gg\gd \mu}u_{j\gd}}{u_{\gg\gg}u_{\gd\gd}}\right)\left(\frac{u_{l\nu \mu}}{u_{\nu \nu}} - \frac{u_{l\gd}u_{\gd \nu \mu}}{u_{\nu\nu}u_{\gd\gd}}\right)\\
 						&\ -\frac{u_{i \nu}}{u_{ii}u_{\mu\mu}}\left(u_{il\mu} - \frac{u_{i\gg \mu}u_{l\gg}}{u_{\gg\gg}} - \frac{u_{i\gg}u_{l\gg \mu}}{u_{\gg\gg}} + \frac{u_{i\gg}u_{\gg\gd \mu}u_{l\gd}}{u_{\gg\gg}u_{\gd\gd}}\right)\left(\frac{u_{j\nu \mu}}{u_{\nu \nu}} - \frac{u_{j\gd}u_{\gd \nu \mu}}{u_{\nu\nu}u_{\gd\gd}}\right).
 					\end{align*}
 			\end{description}
 		
 		Combining these, simplifying, and comparing to Lemma \ref{lem:LW-from-u}, we see that 
 		$$LW_{jl} = W^{PQ}g^{RS}W_{jP,R}W_{lQ,S} + (\Psi_*)_j^Pf_{PQ}(\Psi_*)^Q_l.$$
 		\end{proof}
 	\end{proposition}
 	\begin{remark}
 		This implies, that $L(vWv^T)\geq 0$ for every $v \in \R^{n}$ when $f\equiv const.$, which was shown in \cite{streets_evans-krylov_2016} and is the crucial step which makes the application of the Krylov maximum principle possible.
 	\end{remark}
 	
	\begin{proposition}
		The collection of functions
		$$S_{ABC} \coloneq W_{AB,R}(\Psi_*)^R_C$$
		where $W$ and $\Psi_*$ is given by 
		$$\Psi_* = \begin{pmatrix}
			I & 0\\
			-(D_y^2u)^{-1}(D_{xy}^2u) & (D_y^2u)^{-1}
		\end{pmatrix}$$
		is totally symmetric at any point at which $g$ has been diagonalized.
		\begin{proof}
			Given that we are dividing 2 types of indices among 3 index locations, there are 8 different distinguishable index types. Considering that symmetry moves cannot change type, these 8 distinguishable types divide into 4 cases $(3+,0)$, $(2+, 1-)$, $(1+,2-)$, $(0+,3-)$. The first and fourth reduce to computing one formula and observing. Cases 2 and 3 require computing 3 formulas (one for each of the distinguishable index types) and comparing.
			\begin{description}
				\item[Case 1]$+/+/+$	
				\begin{align*}
					S_{ijk} =&\ W_{ij,R}(\Psi_*)^R_k\\
					 =&\ W_{ij,k} - W_{ij,\ga}u_{k\ga}u^{\ga\ga}\\
					 =&\ \left(u_{ijk} - \frac{u_{ik\gg}u_{j\gg}}{u_{\gg\gg}} - \frac{u_{i\gg}u_{jk\gg}}{u_{\gg\gg}} + \frac{u_{i\gg}u_{k\gg\gd}u_{j\gd}}{u_{\gg\gg}u_{\gd\gd}}\right)\\
					 &\ - u_{k\ga}\left(\frac{u_{ij\ga}}{u_{\ga\ga}} - \frac{u_{i\gg\ga}u_{j\gg}}{u_{\ga\ga}u_{\gg\gg}} - \frac{u_{i\gg}u_{j\gg\ga}}{u_{\ga\ga}u_{\gg\gg}} + \frac{u_{i\gg}u_{\gg\gd\ga}u_{j\gd}}{u_{\ga\ga}u_{\gg\gg}u_{\gd\gd}}\right)
				\end{align*}
				This is symmetric in $i,j,k$, so the result is proved.
							
				\item[Case 2]$+/+/-$
				$$S_{ij\ga} = W_{ij,R}(\Psi_*)^R_\ga = W_{ij,\ga}u^{\ga\ga} = \frac{u_{ij\ga}}{u_{\ga\ga}} - \frac{u_{i\gg\ga}u_{j\gg}}{u_{\ga\ga}u_{\gg\gg}} - \frac{u_{i\gg}u_{j\gg\ga}}{u_{\ga\ga}u_{\gg\gg}} + \frac{u_{i\gg}u_{\gg\gd\ga}u_{j\gd}}{u_{\ga\ga}u_{\gg\gg}u_{\gd\gd}}$$
				By symmetry, $S_{ji\ga}$.
				$$S_{i\ga j} = W_{i\ga, R}(\Psi_*)^R_j = W_{i\ga,j} - W_{i\ga,\gb}u_{j\gb}u^{\gb\gb} = \frac{u_{ij\ga}}{u_{\ga\ga}} - \frac{u_{i\gg}u_{j\gg\ga}}{u_{\ga\ga}u_{\gg\gg}} - \frac{u_{i\ga\gg}u_{j\gg}}{u_{\ga\ga}u_{\gg\gg}} + \frac{u_{i\gg}u_{\ga\gg\gd}u_{j\gd}}{u_{\ga\ga}u_{\gg\gg}u_{\gd\gd}}$$
				By symmetry, this is also $S_{j\ga i}$ and also $S_{\ga j i}$.
				
				\item[Case 3]$+/-/-$
				$$S_{i\ga\gb} = W_{i\ga,R}(\Psi_*)^R_\gb = W_{i\ga,\gb}u^{\gb\gb} = \frac{u_{i\ga\gb}}{u_{\ga\ga}u_{\gb\gb}} - \frac{u_{i\gg}u_{\gg\ga\gb}}{u_{\ga\ga}u_{\gb\gb}u_{\gg\gg}}$$
				By symmetry, it is clear that this must be equal to $S_{\ga i\gb}$ and also $S_{i\gb\ga}$. The only remaining term is then
				$$S_{\ga\gb i} = W_{\ga\gb, R}(\Psi_*)^R_i	= W_{\ga\gb, i} - W_{\ga\gb, \gg}u_{i\gg}u^{\gg\gg} = \frac{u_{\ga\gb i}}{u_{\ga\ga}u_{\gb\gb}} - \frac{u_{\ga\gb\gg}u_{i\gg}}{u_{\ga\ga}u_{\gb\gb}u_{\gg\gg}}$$
				
				\item[Case 4]$-/-/-$
				So we have 
				$$S_{\ga\gb\gg} = W_{\ga\gb,R}(\Psi_*)^R_\gg = W_{\ga\gb,\gg}u^{\gg\gg} = \frac{u_{\ga\gb\gg}}{u_{\ga\ga}u_{\gb\gb}u_{\gg\gg}}.$$
				This is obviously symmetric.
			\end{description}
		\end{proof}
	\end{proposition}
	
	\begin{corollary}\label{cor:LW-from-S}
		Applying the prior two propositions and the fact that $g^{-1} = (\Psi_*) W^{-1} (\Psi_*)^T$ gives 
		$$LW_{AB} = W^{PQ}W^{RS}S_{APR}S_{BQS} + (\Psi_*)_A^Pf_{PQ}(\Psi_*)^Q_B$$
        at any point where $g$ is diagonalized.
	\end{corollary}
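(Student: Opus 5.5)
We start from Proposition \ref{thm:LW-from-W}, which at a point where $g$ is diagonalized gives
$$LW_{AB} = W^{PQ}g^{RS}W_{AP,R}W_{BQ,S} + (\Psi_*)_A^Pf_{PQ}(\Psi_*)^Q_B.$$
The $f$-term already has the required form, so only the cubic term needs to be rewritten. We will substitute the identity $g^{-1} = \Psi_* W^{-1}\Psi_*^T$, written in indices as $g^{RS} = (\Psi_*)^R_C W^{CD} (\Psi_*)^S_D$.

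This identity follows directly from \eqref{eq:pos}, that is, from $\Psi_*^T g \Psi_* = W$. Both sides are invertible, so inverting gives $\Psi_*^{-1} g^{-1} \Psi_*^{-T} = W^{-1}$. Multiplying on the left by $\Psi_*$ and on the right by $\Psi_*^T$ yields $g^{-1} = \Psi_* W^{-1}\Psi_*^T$. The index placement must match the convention $S_{ABC} = W_{AB,R}(\Psi_*)^R_C$, in which the upper index of $\Psi_*$ is contracted against the derivative index. Since $(\Psi_*)^R_C$ denotes the $(R,C)$ entry of $\Psi_*$, the $(R,S)$ entry of $\Psi_* W^{-1}\Psi_*^T$ is $(\Psi_*)^R_C W^{CD}(\Psi_*)^S_D$, as needed.

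Substituting this into the cubic term gives
$$W^{PQ}g^{RS}W_{AP,R}W_{BQ,S} = W^{PQ}W^{CD}\bigl(W_{AP,R}(\Psi_*)^R_C\bigr)\bigl(W_{BQ,S}(\Psi_*)^S_D\bigr) = W^{PQ}W^{CD}S_{APC}S_{BQD}.$$
Renaming the summation indices $C,D$ as $R,S$ then produces exactly the formula claimed in the corollary.

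The total symmetry of $S$ from the preceding proposition is not needed for this identity. It only enters later, in \S\ref{subsec:third-fourth}, when the slots of $S$ are permuted.

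The one point requiring care is that the substitution is legitimate at the chosen point. Proposition \ref{thm:LW-from-W} holds at a point where $g$ is diagonal, while the identity $g^{-1}=\Psi_*W^{-1}\Psi_*^T$ is pointwise, coordinate-free linear algebra valid everywhere. Moreover, we only substitute the value of $g^{RS}$ at that point, and no derivatives of it. The combination is therefore valid at any diagonalizing point.
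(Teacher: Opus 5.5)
Your proposal is correct and follows the paper's own argument. You substitute $g^{RS} = (\Psi_*)^R_C W^{CD}(\Psi_*)^S_D$, obtained from $\Psi_*^T g\Psi_* = W$, into Proposition~\ref{thm:LW-from-W}, and each $\Psi_*$ factor is absorbed into $S_{ABC} = W_{AB,R}(\Psi_*)^R_C$. You are also right that this identity uses only the definition of $S$ and not its total symmetry, even though the paper cites both propositions.
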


\end{document}